\documentclass[a4paper,10pt]{article}

\usepackage{amssymb,amsthm} 
\usepackage{xypic}  
\usepackage{amsmath} 
\usepackage{mathrsfs}


\def\R{{\mathbb R}}
\def\C{{\mathbb C}}

\def\A{{\mathcal A}}

\def\epsi{\varepsilon}

\def\G{{\mathscr G}}
\def\BG{{\mathscr B}}
\def\H{{\mathscr H}}
\def\K{{\mathscr K}}

\def\d{{\mathrm d}}

\newtheorem{theorem}{Theorem}[section]

\newtheorem{lemma}[theorem]{Lemma}
\newtheorem{proposition}[theorem]{Proposition}
\newtheorem{corollary}[theorem]{Corollary}

\theoremstyle{definition}
\newtheorem{definition}[theorem]{Definition}

\newtheorem{example}[theorem]{Example}

\theoremstyle{remark}

\numberwithin{equation}{section}


\title{Secondary Theories for \'etale groupoids}

\author{Marcello Felisatti \thanks{\noindent Escuela de Matematica, Universidad
de Costa Rica \protect \\ \protect
Recinto Rodrigo Facio, San Jos\'e, apartado 2060  Costa Rica \protect \\
\protect mfelisatti@gmail.com}
\and Frank Neumann \thanks{Department of Mathematics, University of Leicester \protect \\
University Road, Leicester LE1 7RH, England, UK \protect \\ fn8@mcs.le.ac.uk}}

\begin{document}

\maketitle

\begin{abstract}
{Generalizing Karoubi's multiplicative K-theory and multiplicative cohomology
groups for smooth manifolds we define secondary theories and characteristic
classes for smooth \'etale groupoids. As special cases we obtain versions of the
groups of differential characters 
for smooth \'etale groupoids and for orbifolds.}
\end{abstract}

\section*{Introduction}

In this article we introduce secondary theories and characteristic classes for
principal bundles with connections over \'etale groupoids. 
In particular, we generalize the multiplicative cohomology groups of Karoubi
\cite{K1}, \cite{K2} and the groups of Cheeger-Simons differential characters
\cite{CS} for smooth manifolds to smooth \'etale groupoids and arbitrary
filtrations of the associated simplicial de Rham complex and analyse their
interplay. 
By specializing to Deligne's `filtration b\^{e}te' we obtain generalized
versions of Cheeger-Simons differential characters for \'etale groupoids. In the
particular case of a proper \'etale groupoid representing an orbifold we obtain
orbifold versions of these secondary theories, which in the case of differential
characters were studied before by Lupercio and Uribe \cite{LU} using the
geometric approach of Hopkins and Singer \cite{HS} for the construction of
generalized differential cohomology theories. 

More generally we are giving a definition of generalized differential characters
for smooth \'etale groupoids associated to an arbitrary filtration of the de
Rham complex, which in the case of the `filtration b\^{e}te' gives the classical
versions of Cheeger and Simons. This more general definition allows to construct
an explicit map on the levels of cocycles between multiplicative cohomology and
these generalized differential characters. It follows that these general groups
of multiplicative cohomology are the natural places for constructing and
analysing secondary characteristic classes for principal bundles with
connections on \'etale groupoids. In general such principal bundles, in contrast
with the case of smooth manifolds, might not always admit connections, but in the
important case for example of  principal bundles over proper \'etale groupoids,
which are groupoids representing orbifolds, such connections always exist. 

In the first section we will give an overview of the basic elements of de Rham
and Chern-Weil theory for smooth \'etale groupoids following \cite{LTX}.
In the following section we define multiplicative cohomology and generalized
differential characters for smooth \'etale groupoids and study their main
properties. We will make explicit use here of the simplicial machinery as
developed in \cite{FN}, but for the convenience of the reader we will recall
many of the details. This was in part influenced by the work of Dupont
\cite{D1}, \cite{D2} and Dupont-Hain-Zucker \cite{DHZ}. 
In the third section we introduce multiplicative bundles and versions of
multiplicative $K$-theory for smooth \'etale groupoids. In the final section we
then construct characteristic classes of elements in multiplicative $K$-theory
for smooth \'etale groupoids with values in the groups of multiplicative
cohomology and generalized differential characters. 

In a sequel to this article we aim to study the relationship between our
generalized groups of multiplicative cohomology and smooth Deligne cohomology
for \'etale groupoids and to construct in a unifying way secondary theories and
characteristic classes for principal bundles over orbifolds, foliations and
differentiable stacks. Versions of smooth Deligne cohomology for particular
\'etale groupoids were also studied before, for example in the case of
particular orbifold groupoids by Lupercio-Uribe \cite{LU} and for general
transformation groupoids by Gomi \cite{Go}. It will be interesting to analyse if
these secondary theories and characteristic classes could be also defined in the
context of algebraic geometry, for example in order to extend algebraic
differential characters as introduced and studied by Esnault \cite{E1},
\cite{E2} to smooth Deligne-Mumford stacks. Real Deligne cohomology 
groups of proper smooth Deligne-Mumford stacks over the complex numbers $\C$
appear also in the context of arithmetic intersection theory on Deligne-Mumford
stacks as introduced recently by Gillet \cite{Gi}.

\medskip \noindent {\it Acknowledgements.} This work was initiated while the
second author was visiting the Tata Institute of Fundamental Research in Mumbai.
He likes to thank Prof. N. Nitsure for the kind invitation. He also acknowledges
financial support by the London Mathematical Society and the University of
Leicester.

\section{Elements of  Chern-Weil theory  for \'etale \\ groupoids}\label{sim}

Let us recall the main ingredients of de Rham and Chern-Weil theory for \'etale
groupoids. We will be following here the simplicial approach of  \cite{DHZ} and
\cite{FN}. For the general theory of \'etale groupoids and their Chern-Weil
theory we refer the reader to \cite{LTX} and \cite{CM1}. We will restrict
ourselves in this note to the case of \'etale groupoids, but many of the
concepts and constructions can be derived also in more general contexts. 

Let $\G=[\G_1\rightrightarrows^{\hspace*{-0.25cm}^{s}}_{\hspace*{-0.25cm}{_t}}
\G_0]$ be a {\it smooth \'etale groupoid}, this means that all structure maps of
$\G$
$$\G_1 \times_{t, \G_0, s} \G_1 \stackrel{m}\rightarrow
\G_1\stackrel{i}\rightarrow
\G_1\rightrightarrows^{\hspace*{-0.25cm}^{s}}_{\hspace*{-0.25cm}{_t}}
\G_0\stackrel{e}\rightarrow \G_1 $$
are local diffeomorphisms with $\G_0, \G_1$ smooth manifolds, $m$ the
composition map, $i$ the inverse, $e$ the identity and $s, t$ the source and
target maps.

If in addition, the anchor map
$$(s, t): \G_1\rightarrow \G_0\times \G_0$$
is {\it proper}, the groupoid $\G$ is called a {\it proper} smooth \'etale
groupoid. Each proper smooth \'etale groupoid represents an orbifold \cite{M}
and therefore the concepts and constructions presented in this note have direct
applications to orbifolds. 

Associated to any smooth \'etale groupoid
$\G=[\G_1\rightrightarrows^{\hspace*{-0.25cm}^{s}}_{\hspace*{-0.25cm}{_t}}
\G_0]$ is a canonical smooth simplicial manifold $\G_{\bullet}=\{\G_n\}$, where
for each $n\geq 0$
$$\G_n=\G_1 \times_{t, \G_0 ,s} \G_1 \cdots \times_{t, \G_0, s} \G_1 \,\,\,
\mathrm{(n\,\, factors)}$$
is the smooth manifold consisting of all composable $n$-tuples together with the
canonical smooth face and degeneracy maps
$$\epsi_i: \G_n\rightarrow \G_{n-1}, \,\,\, \eta_i: \G_n\rightarrow \G_{n+1}$$
for $0\leq i\leq n$ such that the usual simplicial identities hold (see
\cite{S}, \cite{D1}). In other words, $\G_{\bullet}$ is a simplicial object in
the category of smooth manifolds.

To the simplicial smooth manifold $\G_{\bullet}$ we can furthermore associate a
simplicial space $||\G_{\bullet}||$, its {\it fat realization}, defined as the 
quotient space
$$||\G_{\bullet}||= \coprod_{n\geq 0} (\Delta^n \times \G_n)/\sim$$
where the equivalence relation is generated by
$$(\epsi^i \times id)(t,x)\sim (id \times \epsi_i)(t,x)$$
for any $(t, x) \in \Delta^{n-1} \times \G_n$.

A fundamental concept for \'etale groupoids is that of Morita equivalence.
Basically a Morita equivalence class of \'etale groupoids defines a
differentiable stack. Let us very briefly recall the main constructions here. We
start with the definition of generalized homomorphisms between \'etale
groupoids.

\begin{definition}
A {\it generalized homomorphism} $\phi:=(Z, \sigma, \tau)$ between \'etale
groupoids  $\G=[\G_1\rightrightarrows \G_0]$ and $\H=[\H_1\rightrightarrows
\H_0]$ is given by a smooth manifold $Z$, two smooth maps
$\G_0\stackrel{\tau}\leftarrow Z \stackrel{\sigma}\rightarrow \H_0$, a left
action of $\G_1$ with respect to $\tau$, a right action of $\H_1$ with respect
to $\sigma$, such that the two actions commute, and $Z$ is an $\H_1$-principal
bundle over $\G_0$.
\end{definition}

Two generalized homomorphisms $\phi_1:=(Z_1, \sigma_1, \tau_1)$ and
$\phi_2:=(Z_2, \sigma_2, \tau_2)$ from $\G$ to $\H$ are called {\it equivalent}
if there is an $\G_1$-$\H_1$-equivariant diffeomorphism $\psi: Z_1\rightarrow
Z_2$. Composition of generalized homomorphisms is defined as follows: if $\phi:
\G_0\stackrel{\tau}\leftarrow Z \stackrel{\sigma}\rightarrow \H_0$ is a
generalized homomorphism from 
$\G=[\G_1\rightrightarrows \G_0]$ to $\H=[\H_1\rightrightarrows \H_0]$ and
$\phi': \H_0\stackrel{\tau'}\leftarrow Z' \stackrel{\sigma'}\rightarrow \K_0$
a generalized homomorphism from $\H=[\H_1\rightrightarrows \H_0]$ to
$\K=[\K_1\rightrightarrows \K_0]$, then the composition
$\phi\circ \phi': \G_0\leftarrow Z'' \rightarrow \K_0$ defined by
$$Z''=Z\times_{\H_1} Z':=(Z\times_{\sigma, \H_0, \tau'} Z')/{(z, z') \sim (zh,
h^{-1}z')}$$
is a generalized homomorphism from $\G=[\G_1\rightrightarrows \G_0]$ to
$\K=[\K_1\rightrightarrows \K_0]$. The composition of equivalence classes of
generalized homomorphisms is also associative and we get the {\it category
${\mathcal Grp}$ of \it \'etale groupoids}, whose objects are \'etale groupoids
and whose morphisms are generalized homomorphisms. The isomorphisms in this
category $\mathcal Grp$ are called {\it Morita equivalences} and it follows that
any generalized homomorphism of \'etale groupoids can be decomposed as the
composition of a Morita equivalence and a strict homomorphism of groupoids.
The category ${\mathcal Grp}[{\mathcal M}]^{-1}$ obtained from $\mathcal Grp$
via localizing with respect to Morita equivalences gives the {\it category of
differentiable stacks}, i.e. a differentiable stack can be thought of as a
Morita equivalence class of an \'etale groupoid \cite{Pr}.

Let us now recall the basic ingredients of de Rham and Chern-Weil theory for
\'etale groupoids using the approach in  \cite{DHZ} and \cite{FN} for simplicial
smooth manifolds. Working simplicially, there are in fact two versions of  the
de Rham and singular complexes we can associate to an \'etale groupoid
$\G=[\G_1\rightrightarrows^{\hspace*{-0.25cm}^{s}}_{\hspace*{-0.25cm}{_t}}
\G_0]$ using the associated simplicial smooth manifold $\G_{\bullet}$.\\

{\bf The de Rham complex of compatible forms.} A simplicial smooth
complex $k$-form $\omega$ on $\G_{\bullet}$ is a sequence
$\{\omega^{(n)}\}$ of smooth complex $k$-forms $\omega^{(n)}\in
\Omega^k_{dR}(\Delta^n\times \G_n)$ satisfying the compatibility
condition
$$(\epsi^i\times id)^* \omega^{(n)}=(id \times \epsi_i)^*\omega^{(n-1)}$$
in $\Omega^k_{dR}(\Delta^{n-1}\times \G_n)$ for all $0\leq i\leq n$
and all $n\geq 1$. Let $\Omega^k_{dR}(\G_{\bullet})$ be the set of
all simplicial smooth complex $k$-forms on $\G_{\bullet}$. The
exterior differential on $\Omega^k_{dR}(\Delta^n\times \G_n)$ induces
an exterior differential $\d$ on $\Omega^k_{dR}(\G_{\bullet})$. We
denote by $(\Omega^*_{dR}(\G_{\bullet}), \d)$ the de Rham complex of
compatible forms.

We note that $(\Omega^*_{dR}(\G_{\bullet}), \d)$ is given as the total complex
of a double complex $(\Omega^{*,*}_{dR}(\G_{\bullet}),\d', \d'')$
with
$$\Omega^k_{dR}(\G_{\bullet})=\bigoplus_{r+s=k}
\Omega^{r,s}_{dR}(\G_{\bullet})$$
and $\d=\d'+ \d''$, where $\Omega^{r,s}_{dR}(\G_{\bullet})$ is the
vector space of $(r+s)$-forms, which when restricted to
$\Delta^n\times \G_n$ are locally of the form
$$\omega|_{\Delta^n\times \G_n}= \sum a_{i_1\ldots i_r j_1\ldots j_s}\d
t_{i_1}\wedge
\ldots \d t_{i_r}\wedge \d x_{j_1}\wedge \d x_{j_s},$$
where $(t_0, \ldots, t_n)$ are barycentric coordinates of $\Delta^n$ and
the $\{x_j\}$ are local coordinates of $\G_n$. Furthermore the differentials
$\d'$ and $\d''$ are
the exterior differentials on $\Delta^n$ and $\G_n$ respectively.

Note that $\omega=\{\omega^{(n)}\}$ gives a smooth $k$-form on
$$\coprod_{n\geq 0} (\Delta^n \times \G_n)$$ and the compatible condition is
precisely what is needed to define a form on the fat realization
$||\G_{\bullet}||$ of $\G_{\bullet}$.\\

{\bf The simplicial de Rham complex.} The de Rham complex
$(\A^*_{dR}(\G_{\bullet}), \delta)$ of $\G_{\bullet}$ is given as the
total complex of a double complex $(\A^{*,*}_{dR}(\G_{\bullet}),
\delta', \delta'')$ with
$$\A^k_{dR}(\G_{\bullet})=\bigoplus_{r+s=k} \A^{r,s}_{dR}(\G_{\bullet})$$
and $\delta =\delta' + \delta''$, where
$\A^{r,s}_{dR}(\G_{\bullet})=\Omega^s_{dR}(\G_r)$ is the set of smooth
complex $s$-forms on the smooth manifold $\G_r$. Furthermore the
differential
$$\delta'': \A^{r,s}_{dR}(\G_{\bullet})\rightarrow \A^{r,
s+1}_{dR}(\G_{\bullet})$$
is the exterior differential on $\Omega^*_{dR}(\G_r)$ and the
differential
$$\delta':\A^{r,s}_{dR}(\G_{\bullet})\rightarrow \A^{r+1,
s}_{dR}(\G_{\bullet})$$
is defined as the alternating sum
$$\delta'=\sum_{i=0}^{r+1}(-1)^i \epsi_i^*.$$

{\bf The simplicial singular cochain complex.} Given a commutative ring $R$ we
can also associate a
singular cochain complex $(S^*(\G_{\bullet}; R), \partial)$ to $\G_{\bullet}$.
It is
defined as a double complex $(S^{*,*}(\G_{\bullet}; R), \partial',
\partial'')$ with
$$S^k(\G_{\bullet}; R)=\bigoplus_{r+s=k} S^{r,s}(\G_{\bullet}; R)$$
and $\partial=\partial'+\partial''$, where
$$S^{r,s}(\G_{\bullet};R)=S^s(\G_r; R)$$
is the set of singular cochains of degree $s$ on the smooth manifold $\G_r$.

There is an integration map
$${\mathcal I}: \A^{r,s}_{dR}(\G_{\bullet})\rightarrow S^{r,s}(\G_{\bullet};
\C)$$
which gives a morphism of double complexes and Dupont's general version of the
de Rham theorem
(see \cite{D2}, Proposition 6.1) shows that this integration map induces natural
isomorphisms
$$H^*(\A^{*}_{dR}(\G_{\bullet}, \delta))\cong H^*(S^*(\G_{\bullet},
\C),\partial)\cong H^*(||\G_{\bullet}||; \C).$$

From Stokes' theorem it follows that there is also morphism of complexes
$${\mathcal J}: \Omega^*_{dR}(\G_{\bullet}), \d) \rightarrow
(\A^*_{dR}(\G_{\bullet}), \delta)$$
defined on $\Omega_{dR}^*(\Delta^n\times \G_n)$ by integration over
the simplex $\Delta^n$
$$\omega^{(n)}\in \Omega_{dR}^*(\Delta^n\times \G_n) \mapsto \int_{\Delta^n}
\omega^{(n)}.$$
This morphism is in fact a quasi-isomorphism (see \cite{D1}, Theorem 2.3 and
Corollary 2.8), i.e. we have
$$H^*(\Omega^*_{dR}(\G_{\bullet}), \d)\cong H^*(\A^*_{dR}(\G_{\bullet}),
\delta)\cong H^*(||\G_{\bullet}||, \C).$$

{\bf The singular cochain complex of compatible cochains.} Let $R$
be a commutative ring. A compatible singular cochain $c$ on
$\G_{\bullet}$ is a sequence $\{c^{(n)}\}$ of cochains $c^{(n)}\in
S^k(\Delta^n\times \G_n; R)$ satisfying the compatibility condition
$$(\epsi^i\times id)^* c^{(n)}=(id \times \epsi_i)^* c^{(n-1)}$$
in $S^k(\Delta^{n-1}\times \G_n)$ for all $0\leq i\leq n$ and all
$n\geq 1$. Let $C^k(\G_{\bullet}; R)$ be the set of all compatible singular
cochains on $\G_{\bullet}$ and $(C^*(\G_{\bullet}; R), \d)$ be the singular
cochain complex of compatible cochains.

The natural inclusion of cochain complexes
$$(C^*(\G_{\bullet}; R), \d)\rightarrow (S^*(\G_{\bullet}; R), \partial)$$
is a quasi-isomorphism \cite{DHZ}.

Integrating forms preserves the compatibility conditions and therefore we get an
induced map of complexes \cite{DHZ}
$${\mathcal I'}: \Omega^*_{dR}(\G_{\bullet})\rightarrow C^*(\G_{\bullet}; \C)$$
fitting into a commutative diagram
\[
\xymatrix{ \Omega^*_{dR}(\G_{\bullet})\ar[r]^{\mathcal
I'}\ar[d]^{\mathcal J}&
C^*(\G_{\bullet}; \C)\ar[d]\\
A^*_{dR}(\G_{\bullet})\ar[r]^{\mathcal I}& S^*(\G_{\bullet}; \C) }
\]
and which is again a quasi-isomorphism, i.e. we have
$$H^*(\Omega^*_{dR}(\G_{\bullet}), \d)\cong H^*(C^*(\G_{\bullet}; \C),
\partial).$$

We call $H^*_{dR}(\G_{\bullet}):=H^*(\Omega^*_{dR}(\G_{\bullet}, \d))$ the {\it
de Rham cohomology} of the groupoid $\G$ and $H^*(\G_{\bullet},
R):=H^*(C^*(\G_{\bullet}; R), \partial)$ the {\it singular cohomology} of  $\G$.
 De Rham and singular cohomology behave well with respect to Morita equivalence
of  \'etale groupoids and are therefore well-defined invariants for
differentiable stacks (see \cite{B1}, Def. 9. or \cite{LTX}, 3.1).

\begin{proposition} Let $R$ be a commutative ring. If $\G=[\G_1\rightrightarrows
\G_0]$ and $\H=[\H_1\rightrightarrows \H_0]$ are \'etale groupoids which are
Morita equivalent, then we have isomorphisms
$$H^k_{dR}(\G_{\bullet})\stackrel{\simeq}\rightarrow H^k_{dR}(\H_{\bullet}),
\,\,\mathrm{and}\,\, H^k(\G_{\bullet}, R)\stackrel{\simeq}\rightarrow
H^k(\H_{\bullet}, R).$$
\end{proposition}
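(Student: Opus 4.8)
The plan is to reduce the statement to a single \emph{essential equivalence} of \'etale groupoids and then apply the de Rham-type comparison isomorphisms recalled above. First, recall (see \cite{Pr}) that two \'etale groupoids $\G$ and $\H$ are Morita equivalent precisely when they can be joined by a span of strict homomorphisms $\G\xleftarrow{f}\K\xrightarrow{g}\H$ in which $f$ and $g$ are essential equivalences, i.e.\ strict homomorphisms that are fully faithful and essentially surjective, and where $\K$ may be taken \'etale. Hence it is enough to show that an essential equivalence $f\colon\K\to\G$ induces isomorphisms $H^k_{dR}(\K_\bullet)\cong H^k_{dR}(\G_\bullet)$ and $H^k(\K_\bullet,R)\cong H^k(\G_\bullet,R)$.

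Any strict homomorphism $f$ induces a morphism of simplicial smooth manifolds $f_\bullet\colon\K_\bullet\to\G_\bullet$, and hence, by pullback of forms and of singular cochains, chain maps on each of the complexes $\Omega^*_{dR}(-)$, $\A^*_{dR}(-)$, $C^*(-;R)$ and $S^*(-;R)$, compatible under $\mathcal J$, $\mathcal I$, $\mathcal I'$ and the inclusion $C^*\hookrightarrow S^*$ with the quasi-isomorphisms recorded in Section~\ref{sim}. So it suffices to prove that $f_\bullet^*$ is a quasi-isomorphism on one complex in the de Rham chain, say $\A^*_{dR}(-)$, and on one in the singular chain, say $S^*(-;R)$.

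The cleanest way I would do this is topologically: an essential equivalence of \'etale groupoids induces a weak homotopy equivalence of fat realizations $||f_\bullet||\colon||\K_\bullet||\xrightarrow{\simeq}||\G_\bullet||$ — it is here that the \emph{fat} realization matters, since the component maps $f_r\colon\K_r\to\G_r$ are only surjective local diffeomorphisms, not diffeomorphisms (cf.\ \cite{M} and the general theory of classifying spaces of \'etale groupoids). Combined with Dupont's de Rham isomorphism $H^*_{dR}(\G_\bullet)\cong H^*(||\G_\bullet||;\C)$ from Section~\ref{sim}, this settles the de Rham part. For the singular part, the double complex $S^{r,s}(\G_\bullet;R)=S^s(\G_r;R)$ carries a first-quadrant spectral sequence with $E_1^{r,s}=H^s(\G_r;R)$ converging to $H^*(||\G_\bullet||;R)$, so that $H^*(\G_\bullet;R)\cong H^*(||\G_\bullet||;R)$ via the quasi-isomorphism $C^*(\G_\bullet;R)\hookrightarrow S^*(\G_\bullet;R)$ of \cite{DHZ}; naturality of all of this in $f_\bullet$ together with the weak equivalence of realizations then gives the isomorphism on singular cohomology. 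An alternative, purely homological route avoids topology by comparing the first spectral sequences of the double complexes directly: $f_\bullet^*$ is a morphism of spectral sequences, and one shows it is an isomorphism on $E_2$ by a cohomological descent (generalized Mayer--Vietoris) argument along the surjective local diffeomorphisms $f_r$.

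I expect the main obstacle to be exactly the comparison step — proving that an essential equivalence of \'etale groupoids induces a weak equivalence of fat realizations, or equivalently establishing the $E_1$/$E_2$-level descent statement — since the reduction to essential equivalences, the construction of the simplicial map, and the bookkeeping with the quasi-isomorphisms of Section~\ref{sim} are all formal. This comparison is well known; see for instance \cite{B1} and \cite{LTX}, to which the resulting isomorphisms could also simply be referred.
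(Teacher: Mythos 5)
Your proposal is correct, and at bottom it rests on the same pillars as the paper's own (very short) proof: the paper simply cites the well-known Morita invariance of the simplicial de Rham and singular complexes (\cite{TXL}, Prop.~2.15, \cite{B1}, \cite{CM1}) and then transfers the isomorphisms to the compatible complexes via the quasi-isomorphisms $\mathcal J$ and $\mathcal I'$ of Section~\ref{sim}, whereas you flesh that citation out: you reduce Morita equivalence to a span of strict essential equivalences (via \cite{Pr}), establish invariance at the level of the fat realization $||\G_\bullet||$ (weak equivalence of realizations, then Dupont's theorem for the de Rham side and the double-complex spectral sequence for the singular side), and transfer back through the same quasi-isomorphisms. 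What your route buys is an actual mechanism behind the cited invariance and a clean explanation of why the fat realization is the right object; what the paper's route buys is brevity, since the invariance of $\A^*_{dR}$ and $S^*$ under Morita equivalence is already in the literature in exactly the form needed. One small inaccuracy to fix: for an essential equivalence $f\colon\K\to\G$ the component maps $f_r\colon\K_r\to\G_r$ need \emph{not} be surjective local diffeomorphisms (essential surjectivity only requires $\G_1\times_{s,\G_0,f_0}\K_0\to\G_0$ to be an \'etale surjection; $f_0$ itself can miss points, e.g.\ the inclusion of a point into the pair groupoid of two points). This does not affect your main route, since the weak equivalence of fat realizations is the well-known statement you defer to \cite{B1}, \cite{LTX}, but the ``alternative purely homological route'' as phrased, with descent along the maps $f_r$, would have to be reformulated in terms of the correct \'etale surjection before it goes through.
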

\begin{proof} That the simplicial de Rham and singular complexes of Morita
equivalent \'etale groupoids give isomorphic cohomology groups is well-known
(see for example \cite{TXL}, Prop. 2.15, \cite{B1} or \cite{CM1}). The above
quasi-isomorphisms between the respective complexes of simplicial and compatible
forms and cochains then give the desired result. \end{proof}
\begin{example}
If $\G=[M\rightrightarrows M]$ is just a smooth manifold $M$, then the de Rham
cohomology and singular cohomology groups of $\G$ are just the ones for smooth
manifolds. Indeed, if $\{U_i\}$ is an open covering of $M$ and $\coprod_i
U_i\rightarrow M$ the \'etale map, then the \'etale groupoid
$\Gamma=[\coprod_{i,j} U_i\cap U_j\rightrightarrows \coprod_i U_i]$ is Morita
equivalent to $\G$ and $H^*_{dR}(\Gamma_{\bullet})\cong H^*_{dR}(M)$. 
\end{example}
\begin{example}Let $G$ be a compact Lie group. If $\G=[G\times
M\rightrightarrows M]$ is the transformation groupoid, then
$H^*_{dR}(\G_{\bullet})$ and $H^*(\G_{\bullet}, R)$ are the $G$-equivariant de
Rham and Borel cohomology groups of $M$.
\end{example}
\begin{example} Let $\G=[\G_1\rightrightarrows \G_0]$ be a proper \'etale
groupoid representing an orbifold and let $\Lambda(\G)$ be the associated
inertia groupoid, i. e. the transformation groupoid
$\Lambda(\G)=[\Omega(\G)\rtimes \G \rightrightarrows \Omega(\G)]$ in which we
denote by $\Omega(\G)=\{g\in \G_1|s(g)=t(g)\}$ the manifold of closed loops.
Then $H^*_{dR}(\Lambda (\G)_{\bullet})$ and $H^*(\Lambda (\G)_{\bullet}, R)$ are
the associated orbifold cohomology groups.
\end{example}
In these examples we could have either used the simplicial or compatible
complexes of $\G_{\bullet}$. We will use the compatible de Rham and cochain
complexes to define generalized versions of  Karoubi's multiplicative cohomology
and Cheeger-Simons differential characters for a given \'etale groupoid $\G$.

Finally we recall the basic aspects of Chern-Weil theory for \'etale groupoids
as developed in \cite{LTX}.\\

{\bf Principal $G$-bundles over \'etale groupoids.} 
Let $G$ be a Lie group with Lie algebra $\frak{g}$ and
$\G=[\G_1\rightrightarrows^{\hspace*{-0.25cm}^{s}}_{\hspace*{-0.25cm}{_t}}
\G_0]$ be an \'etale groupoid. A {\it principal $G$-bundle} over $\G$ consists
of a (right) $G$-bundle $P\stackrel{\pi}\rightarrow \G_0$ over the smooth
manifold $\G_0$ such that the groupoid $\G$ acts on $P\stackrel{\pi}\rightarrow
\G_0$ and this action commutes with the $G$-action (see \cite{LTX}, Def. 2.2).

It turns out, that the category of principal $G$-bundles over an \'etale
groupoid 
$\G=[\G_1\rightrightarrows^{\hspace*{-0.25cm}^{s}}_{\hspace*{-0.25cm}{_t}}
\G_0]$ is equivalent to the category of principal $G$-bundles over the 
simplicial smooth manifold $\G_{\bullet}$ (see \cite{LTX}, Prop. 2.4). Here a
principal $G$-bundle over $\G_{\bullet}$ is given by a simplicial smooth
manifold $P_{\bullet}$ and a morphism $\pi_{\bullet}: P_{\bullet}\rightarrow
\G_{\bullet}$ of simplicial smooth manifolds, such that
\begin{itemize}
\item[(i)] for each $n$ the map $\pi_{p}: P_{n}\rightarrow \G_{n}$ is a
principal $G$-bundle
over $\G_n$
\item[(ii)] for each morphism $f: [m]\rightarrow [n]$ of the simplex category
$\Delta$ the induced map
$f^*: P_{n}\rightarrow P_{m}$ is a morphism of $G$-bundles, i.e. we have a
commutative
diagram
\[
\xy
\xymatrix{
P_n\ar[r]^{f^*}
\ar[d]&
P_m\ar[d]\\
\G_n\ar[r]^{f^*}&\G_m}
\endxy
\]
\end{itemize}
The last condition is just saying that the degeneracy and face maps are
morphisms of principal $G$-bundles.

It follows also, that if $\pi_{\bullet}: P_{\bullet}\rightarrow \G_{\bullet}$ is
a principal
$G$-bundle over $\G_{\bullet}$, then $|\pi_{\bullet}|: |P_{\bullet}|\rightarrow
|\G_{\bullet}|$ is a principal $G$-bundle with $G$-action induced by
$$\Delta^n\times P_{n}\times G\rightarrow \Delta^n\times P_{n}, \, \,
(t,x,g)\mapsto (t, xg).$$

It can be shown that the category of principal $G$-bundles over an \'etale
groupoid $\G=[\G_1\rightrightarrows \G_0]$ is equivalent to the category of
generalized homomorphisms from $[\G_1\rightrightarrows \G_0]$ to the groupoid
$[G\rightrightarrows \cdot]$ and therefore if $\G$ and $\G'$ are Morita
equivalent \'etale groupoids, there is an equivalence of categories of principal
$G$-bundles over $\G$ and $\G'$ (see \cite{LTX}, Prop. 2.11 and Cor 2.12). There
is also a well-defined notion of pull-back of principal $G$-bundles over \'etale
groupoids along generalized homomorphisms.\\

{\bf Connections and curvature for principal $G$-bundles.} Let $G$ be a Lie
group with Lie algebra $\frak{g}$. Furthermore let $\G=[\G_1\rightrightarrows
\G_0]$ be an \'etale groupoid and $P\stackrel{\pi}\rightarrow \G_0$ be a
principal $G$-bundle over $\G$. Following \cite{LTX}, 3.2 we consider
pseudo-connection and pseudo-curvature forms for principal $G$-bundles over
\'etale groupoids.
A {\it pseudo-connection} is a connection $1$-form $\theta\in
\Omega^1_{dR}(P)\otimes \frak{g}$ of the $G$-bundle $P\stackrel{\pi}\rightarrow
\G_0$
(ignoring the groupoid action). The {\it total pseudo-curvature} is a $2$-form 
$\Omega_{\mathrm{total}}\in \Omega^2_{dR}(Q_{\bullet})\otimes \frak{g}$ defined
as
$$\Omega_{\mathrm{total}}=\delta\theta + \frac{1}{2}[\theta, \theta]=
\partial\theta + \Omega$$
where $\Omega=d\theta +\frac{1}{2}[\theta, \theta]\in
\Omega_{dR}^2(P)\otimes\frak{g}$ is the curvature form corresponding to $\theta$
and $Q=\G_1\times_{s, \G_0, \pi} P$. The total pseudo-curvature
$\Omega_{\mathrm{total}}$ has therefore two terms. The first term $\partial
\theta:=s^*\theta-t^*\theta$ is a $\frak{g}$-valued $1$-form on $Q$ and the
second term $\Omega$ is a $\frak{g}$-valued $2$-form on $P$. Both terms are of
total degree $2$ in the double complex $\Omega^*(Q_{\bullet}) \otimes\frak{g}$.
As for connections on principal $G$-bundles on smooth manifolds the
pseudo-curvature also satisfies a Bianchi type identity (see \cite{LTX}, Prop.
3.3). 

A pseudo-connection $\theta\in \Omega^1_{dR}(P)\otimes \frak{g}$ is called a
{\it connection} if $\partial\theta=0$ and in this case
$\Omega_{\mathrm{total}}=\Omega\in \Omega_{dR}^2(P)\otimes\frak{g}$ is called
the {\it curvature}. 
A connection $\theta$ is said to be a {\it flat connection} if its curvature
$\Omega$ vanishes.
A necessary and sufficient condition for a pseudo-connection $\theta$ to be a
connection is that $\theta$ is a basic form with respect to the action of the
pseudo-group of local bisections of the groupoid $\G=[\G_1\rightrightarrows
\G_0]$ (see \cite{LTX}, Prop. 3.6).

In general, in contrast to pseudo-connections, connections for principal
$G$-bundles over \'etale groupoids might not always exist as is shown by the
example of the groupoid $[G\rightrightarrows \cdot]$, where $G$ is a Lie group.
The map $G\rightarrow \cdot$ can be considered as a principal $G$-bundle over
$[G\rightrightarrows \cdot]$, where the groupoid acts on $G$ by left
translations. But connections can only exist if $G$ is a discrete group (see
\cite{LTX}, Example 3.12).

As for manifolds connections behave well with respect to pull-backs under
generalized homomorphisms of \'etale groupoids. If $\G$ and $\G'$ are Morita
equivalent groupoids, there is an equivalence of categories of principal
$G$-bundles with connections over $\G$ and $\G'$. Similarly there is an
equivalence of categories of principal $G$-bundles with flat connections over
$\G$ and $\G'$. Therefore, one can also speak of connections and flat
connections of principal $G$-bundles over differentiable stacks, though
connections might not always exist. The groupoid example above corresponds to
the classifying stack $\BG G$ of the Lie group $G$ and so principal $G$-bundles
over $\BG G$ don't have connections unless $G$ is a discrete group. But we note
the following important case where connections do exist \cite{LTX}, Theorem
3.16:

\begin{proposition} Let $G$ be a Lie group. Any principal $G$-bundle over a
proper \'etale groupoid $\G$ admits a connection. In particular, principal
$G$-bundles over orbifolds admit a connection.
\end{proposition}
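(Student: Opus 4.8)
The plan is to reduce the statement to a standard partition-of-unity argument, carried out compatibly over the simplicial manifold $\G_\bullet$. Recall that a principal $G$-bundle $\pi_\bullet : P_\bullet \to \G_\bullet$ over $\G$ amounts, in each simplicial degree $n$, to a principal $G$-bundle $\pi_n : P_n \to \G_n$ together with $G$-bundle maps covering the face and degeneracy maps. A connection on this bundle (in the sense of the preceding discussion, i.e. a pseudo-connection $\theta$ with $\partial\theta = 0$) is precisely a collection of connection $1$-forms $\theta_n \in \Omega^1_{dR}(P_n)\otimes\frak{g}$ that are compatible with all face and degeneracy maps, equivalently a global connection on the simplicial bundle $P_\bullet$; since connections behave well under Morita equivalence, it suffices to produce one for a conveniently chosen representative groupoid. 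So the first step is to set up this simplicial reformulation explicitly, so that ``connection'' becomes ``simplicially compatible family of connection forms.''

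Next I would exploit properness. Because $\G$ is proper \'etale, $\G_0$ carries a cut-off function: a smooth nonnegative function $c$ on $\G_0$ such that $(s|_{\mathrm{supp}\, t^*c})$ is proper and $\sum_{g\in t^{-1}(x)} c(s(g)) = 1$ for every $x\in\G_0$ (this is the groupoid analogue of a partition of unity subordinate to the orbit equivalence relation, and its existence is exactly what properness of the anchor $(s,t)$ buys us; for a proper \'etale groupoid the fibers of $t$ are finite, so the sum is finite). Pick any pseudo-connection $\theta_0 \in \Omega^1_{dR}(P)\otimes\frak g$ on $P\to\G_0$ — these always exist by the usual partition-of-unity argument on the manifold $\G_0$. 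The idea is then to average $\theta_0$ over the groupoid using $c$ to kill the non-invariant part $\partial\theta_0 = s^*\theta_0 - t^*\theta_0$. Concretely, one defines the averaged form by integrating (summing) the translates $g\cdot\theta_0$ against the cut-off along the $t$-fibers, and checks that the result $\tilde\theta$ is still a connection $1$-form for the $G$-bundle (this uses that convex combinations of connection forms are connection forms) and now satisfies $\partial\tilde\theta = 0$, i.e. is invariant under the action of the pseudo-group of local bisections. By the criterion recalled above (\cite{LTX}, Prop. 3.6), $\partial\tilde\theta=0$ is exactly the condition that $\tilde\theta$ be a genuine connection.

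The main obstacle is making the averaging procedure rigorous and checking it lands in the right space: one must verify that summing $c$-weighted translates of a connection form over the discrete fibers of $t$ produces a \emph{smooth} form on $P$ — this is where the \'etale hypothesis and the support condition on the cut-off are used, since locally only finitely many sheets of $\G_1$ contribute and the cut-off provides the smooth gluing — and that the normalization $\sum c = 1$ forces the $G$-equivariance and vertical-normalization conditions of a connection $1$-form to be preserved. One also has to confirm that the resulting $\tilde\theta$ is genuinely fixed by local bisections rather than merely ``$\partial\tilde\theta=0$ on the nose,'' but properness and the cut-off identity make these equivalent. Once the averaged connection is produced on $\G_0$, compatibility up the simplicial degrees is automatic, since $\partial\tilde\theta=0$ means the pullbacks of $\tilde\theta$ to all the $P_n$ agree under the face maps. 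Finally, the statement for orbifolds follows immediately: an orbifold is represented by a proper \'etale groupoid, principal $G$-bundles over it are principal $G$-bundles over that groupoid, and Morita invariance of the category of bundles-with-connection ensures the construction is independent of the chosen presentation.
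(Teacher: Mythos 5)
Your argument is correct and is essentially the standard one: the paper gives no proof of its own but simply cites \cite{LTX}, Theorem 3.16, whose proof is exactly your construction --- average a pseudo-connection (which exists by a partition of unity on $\G_0$) against a cutoff function provided by properness, using that the translates under the \'etale action are again connection forms and that the normalized, locally finite sum is invariant, i.e.\ satisfies $\partial\tilde\theta=0$. One small correction: for a proper \'etale groupoid the fibers of $t$ need not be finite (only the isotropy groups and the sets $t^{-1}(x)\cap s^{-1}(K)$ for $K$ compact are), so the local finiteness of the averaging sum comes from the support condition on the cutoff function --- which you do impose --- rather than from finiteness of the $t$-fibers.
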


Using the correspondence between the category of principal $G$-bundles over an
\'etale groupoid 
$\G=[\G_1\rightrightarrows \G_0]$ with the category of principal $G$-bundles
over the  associated simplicial smooth manifold $\G_{\bullet}$ a connection
$\theta$ on a principal $G$-bundle over $\G$ corresponds to a connection, also
be denoted by $\theta$, on the principal $G$-bundle
$\pi_{\bullet}: P_{\bullet}\rightarrow \G_{\bullet}$ over the associated
simplicial smooth manifold
 $\G_{\bullet}$, i. e.  a $G$-invariant $1$-form in the de Rham complex of
compatible forms
$$\theta \in \Omega^1_{dR}(P_{\bullet})\otimes \frak{g}$$
taking values in the Lie algebra $\frak{g}$ of $G$, on which $G$ acts via the
adjoint representation,
such that for each $n$ the restriction
$$ \theta^{(n)}=\theta|_{\Delta^n\times P_n},$$
is a connection on the bundle
$\pi_n: \Delta^n\times P_n \rightarrow \Delta^n\times \G_n$.
In this way $\theta=\{\theta^{(n)}\}$ can as well be interpreted as a sequence
of
$\mathfrak g$-valued compatible 1-forms.

The curvature $\Omega$ of a connection form $\theta$ is then given as the
differential form
$$\Omega=d\theta + \frac{1}{2}[\theta, \theta]\in
\Omega^2_{dR}(\G_{\bullet})\otimes \frak{g}.$$

The behaviour of connection forms under invariant polynomials with respect to
the Chern-Weil map for the associated 
simplicial smooth manifold is given as follows (see \cite{D1}, Proposition 3.7)

\begin{theorem}
Let $G$ be a Lie group and $\Phi$ be an invariant polynomial. The differential
form
$\Phi(\theta)\in \Omega^*_{dR}(P_{\bullet})$ is a closed form and
descends to a closed form in $\Omega^*_{dR}(\G_{\bullet})$ and its
cohomology class represents the image of the class $\Phi\in H^*(BG;
\C)$ under the Chern-Weil map
$H^*(BG; \C) \rightarrow H^*(||\G_{\bullet}||; \C)$ associated to the principal
$G$-bundle $\pi_{\bullet}: P_{\bullet}\rightarrow \G_{\bullet}$, where $BG$ is
the classifying space of $G$.
\end{theorem}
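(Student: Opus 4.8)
The plan is to reduce the statement to the corresponding result for a single principal $G$-bundle with connection over a smooth manifold, applied level by level, and then to check that the Chern--Weil construction is compatible with the simplicial (face-map) structure. Concretely, for each $n$ the restriction $\theta^{(n)}=\theta|_{\Delta^n\times P_n}$ is an ordinary connection on the principal $G$-bundle $\pi_n:\Delta^n\times P_n\to\Delta^n\times\G_n$, so the classical Chern--Weil theory on the smooth manifold $\Delta^n\times P_n$ tells us that $\Phi(\theta^{(n)})=\Phi(\Omega^{(n)})\in\Omega^*_{dR}(\Delta^n\times P_n)$ is closed (by invariance of $\Phi$ together with the Bianchi identity $d\Omega=[\Omega,\theta]$) and that it is basic, hence descends to a closed form on $\Delta^n\times\G_n$. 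The $G$-invariance of $\theta$ in the de Rham complex of compatible forms ensures that $\Phi(\theta)=\{\Phi(\theta^{(n)})\}$ is itself $G$-invariant, so this descent can be carried out simultaneously for all $n$.

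Next I would verify that $\Phi(\theta)$ is a genuine element of the de Rham complex of compatible forms, i.e. that it satisfies the compatibility condition $(\epsi^i\times id)^*\Phi(\theta^{(n)})=(id\times\epsi_i)^*\Phi(\theta^{(n-1)})$. This follows because $\theta$ is by hypothesis a compatible $1$-form on $P_\bullet$, the exterior differential and the bracket commute with the relevant pullbacks, and $\Phi$ is applied pointwise; hence the curvature $\Omega=d\theta+\tfrac12[\theta,\theta]$ is a compatible $2$-form, and so is any invariant polynomial in it. Since $\d$ on $\Omega^*_{dR}(\G_\bullet)$ is induced from the exterior differentials on the pieces $\Delta^n\times\G_n$ and each $\Phi(\theta^{(n)})$ is closed there, $\Phi(\theta)$ is closed in $\Omega^*_{dR}(\G_\bullet)$.

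For the cohomological identification, I would pass to the fat realization. By the remarks recalled earlier in Section~\ref{sim}, a compatible form on $\G_\bullet$ is precisely a form on $||\G_\bullet||$, and under $|\pi_\bullet|:|P_\bullet|\to|\G_\bullet|$ a connection $\theta$ in the de Rham complex of compatible forms on $P_\bullet$ realizes to a connection form on the principal $G$-bundle $|P_\bullet|\to|\G_\bullet|$. Classical Chern--Weil theory for this bundle then shows that the cohomology class of $\Phi(\theta)$ in $H^*(||\G_\bullet||;\C)$ is exactly the image of $\Phi\in H^*(BG;\C)$ under the map classifying $|P_\bullet|\to|\G_\bullet|$, which is by definition the Chern--Weil map $H^*(BG;\C)\to H^*(||\G_\bullet||;\C)$ of the bundle $\pi_\bullet:P_\bullet\to\G_\bullet$. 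I would cite \cite{D1}, Proposition 3.7 here, since the argument is identical to Dupont's treatment for simplicial smooth manifolds.

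The main obstacle is not any single computation but making sure that the "realization" of a connection in the complex of compatible forms really is a connection on $|P_\bullet|$ in the ordinary sense, and that its curvature realizes to $\Phi$ evaluated on an honest curvature on the total space; this is where the distinction between the two de Rham complexes (compatible forms versus simplicial forms $\A^*_{dR}$) matters, and where one must use that a connection satisfies $\partial\theta=0$ so that $\Omega_{\mathrm{total}}=\Omega$ lives purely in the $\Omega^{0,2}$-type piece and behaves as a genuine curvature form after realization. Once this identification of $|P_\bullet|\to|\G_\bullet|$ with its connection is in place, the rest is the usual Chern--Weil argument applied on a smooth (infinite-dimensional, but harmless) base, together with the bookkeeping that the construction is natural in the simplex category and hence well-defined on the quotient $||\G_\bullet||$.
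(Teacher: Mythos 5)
Your proposal is correct in substance and lands where the paper itself does: the paper gives no argument for this theorem beyond the citation of Dupont (\cite{D1}, Proposition 3.7), combined with the equivalence between principal $G$-bundles over $\G$ and over the simplicial manifold $\G_{\bullet}$, and your levelwise closedness, descent and compatibility checks are exactly the routine part of that result. The one place where your route genuinely deviates is the cohomological identification: you propose to realize the bundle to $|P_{\bullet}|\to||\G_{\bullet}||$ and run ``classical'' Chern--Weil theory there, but $||\G_{\bullet}||$ is only a quotient topological space, not a smooth (even infinite-dimensional) manifold, so Chern--Weil theory on the realization is not literally available; moreover the delicate point is not whether $\partial\theta=0$ forces the curvature into the $(0,2)$-piece --- that is already built into the hypothesis that $\theta$ is a connection on $P_{\bullet}$ in the complex of compatible forms. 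Dupont's actual argument, which is what the paper implicitly invokes, stays simplicial: one compares with a universal simplicial (or, as in Theorem \ref{uni}, bisimplicial) model of $BG$ equipped with a universal connection, pulls back along the classifying map, and uses the quasi-isomorphism $H^*(\Omega^*_{dR}(\G_{\bullet}),\d)\cong H^*(||\G_{\bullet}||;\C)$ to identify the class of $\Phi(\theta)$ with the image of $\Phi$ under the Chern--Weil map. Since you defer precisely this step to \cite{D1}, Proposition 3.7, your proof is acceptable as written; just be aware that the realization picture is a heuristic gloss on that simplicial comparison, and what it buys you is intuition rather than a proof independent of Dupont.
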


We will denote the associated form in $\Omega^*_{dR}(\G_{\bullet})$ also by
$\Phi(\theta)$. When defining characteristic classes it is necessary that given
any connection on a principal bundle, we can construct a connection on (a model
of) the
universal bundle that pulls back to the given one. This follows from the
following theorem:

\begin{theorem}\label{uni}
Let $G$ be a Lie group and $\G=[\G_1\rightrightarrows \G_0]$ be an \'etale
groupoid. Let $P\stackrel{\pi}\rightarrow \G_0$ be a principal $G$-bundle over
$\G$ with connection. 
Then there exists a bisimplicial smooth manifold $B_{\bullet
\bullet}$ of the homotopy type of the classifying space $BG$ and a
$G$-principal bundle $U_{\bullet \bullet}\rightarrow B_{\bullet
\bullet}$ with a connection $\theta_{U_{\bullet \bullet}}\in
\Omega^1_{dR}(U_{\bullet \bullet}; \frak{g})$ and a morphism $(\Psi,
\psi)$ of $G$-bundles
\[
\xy
\xymatrix{
P_{\bullet}\ar[r]^{\Psi}
\ar[d]&U_{\bullet \bullet}\ar[d]\\
\G_{\bullet}\ar[r]^{\psi}&B_{\bullet \bullet}}
\endxy
\]
such that $\Psi^*(\theta_{U_{\bullet \bullet}})=\theta$.
\end{theorem}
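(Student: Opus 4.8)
The plan is to construct the universal object by the standard bar-construction/simplicial-classifying-space technique, adapted to the bisimplicial setting forced by the fact that $\G_{\bullet}$ is already simplicial. First I would recall that for a single Lie group $G$ there is the simplicial manifold $NG_{\bullet}$ (the nerve, $NG_q = G^q$) with geometric realization $BG$, and that the bundle $EG_{\bullet} \to NG_{\bullet}$ carries a canonical connection built from the Maurer--Cartan form together with a partition-of-unity/affine-combination construction on the simplices, as in Dupont's work (\cite{D1}, \cite{D2}) that the excerpt already invokes. Feeding in the simplicial direction of $\G_{\bullet}$, set $B_{\bullet\bullet}$ to be the bisimplicial manifold with $B_{p,q}$ built from $\G_p$ and $NG_q$ via the classifying map, i.e. realize the given $G$-bundle $\pi_{\bullet}\colon P_{\bullet}\to\G_{\bullet}$ over the simplicial manifold $\G_{\bullet}$ by a simplicial classifying map $\psi\colon\G_{\bullet}\to B_{\bullet\bullet}$ and pull the universal $U_{\bullet\bullet}\to B_{\bullet\bullet}$ back along it. That $B_{\bullet\bullet}$ has the homotopy type of $BG$ follows from the fact that the realization of $NG_{\bullet}$ is $BG$ and the extra simplicial coordinate contributes only the nerve of $\G$ collapsed appropriately; more precisely one checks the realization in one direction first.

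Next I would produce the classifying map with a compatible connection, rather than just up to homotopy, which is the whole point of the theorem. The mechanism: a principal $G$-bundle $P_n\to\Delta^n\times\G_n$ with connection $\theta^{(n)}$, together with the compatibility conditions under the face maps $\epsi_i$, is exactly the data of a map into a suitable universal object. Concretely, one uses the Narasimhan--Ramanan style construction or the simplicial Čech--de Rham model: choose (or note the existence of) an embedding of $P_n$ into a trivial bundle of large rank, or equivalently a classifying map $\G_n\to N_{q}G$ for each $n$, arranged simplicially in $n$; on each $\Delta^p\times\Delta^q\times(\text{pieces})$ the connection $\theta$ is interpolated affinely so that it restricts correctly. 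The key formal input is that the space of connections on a fixed bundle is an affine space, so connections can be glued along the simplicial/bisimplicial structure using barycentric coordinates, and $\theta$ itself, being a compatible $1$-form $\theta\in\Omega^1_{dR}(P_{\bullet})\otimes\frak{g}$, provides the data on the $\G_{\bullet}$-side that the universal connection $\theta_{U_{\bullet\bullet}}$ must pull back to. One then defines $\Psi$ as the induced bundle map and verifies $\Psi^*(\theta_{U_{\bullet\bullet}})=\theta$ by construction on each bidegree $(p,q)$.

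The main obstacle I expect is purely bookkeeping: making the classifying map and the interpolated connection strictly compatible in \emph{both} simplicial directions at once, i.e. ensuring the affine interpolations used to build $\theta_{U_{\bullet\bullet}}$ commute on the nose with all face and degeneracy maps of the bisimplicial manifold $B_{\bullet\bullet}$, not merely up to homotopy. This is handled by working with the "fat" (non-degenerate) realization so that only face maps need to be respected, and by using a functorial construction of connections (e.g. the canonical connection on $EG_{\bullet}\to NG_{\bullet}$ plus pullback) so that naturality is automatic; the compatibility of $\theta^{(n)}$ under $\epsi_i$ that is already part of the hypothesis is precisely what makes the pullback of a functorial universal connection land on $\theta$. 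A secondary point is identifying the homotopy type of $B_{\bullet\bullet}$ with $BG$, which I would reduce to the classical statement $|NG_{\bullet}|\simeq BG$ together with the observation that the realization of the $\G_{\bullet}$-direction of $B_{\bullet\bullet}$ is $|\G_{\bullet}|\times_{\text{(twist)}} BG$ fibered over $|\G_{\bullet}|$, so fiberwise it is $BG$; alternatively one invokes that realizing a bisimplicial manifold in stages and the known model of $\cite{D1}$ for simplicial manifolds already gives a space of the homotopy type of $BG$ carrying a universal connection, and our $B_{\bullet\bullet}$ is that model applied to the simplicial manifold $\G_{\bullet}$.
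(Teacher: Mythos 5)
Your approach has a genuine gap at the one delicate point of the theorem: the exact equality $\Psi^*(\theta_{U_{\bullet\bullet}})=\theta$. The two mechanisms you offer cannot both be had at once. A classifying map into Dupont's model $NG_{\bullet}$ equipped with its canonical Maurer--Cartan/barycentric connection is functorial, but its pullback is a connection manufactured from chosen local trivialisations and transition functions, not the given $\theta$; a Narasimhan--Ramanan type map does pull a universal connection back to $\theta$, but it depends on auxiliary choices (embeddings into trivial bundles) which cannot in general be made strictly compatible with all face maps of $\G_{\bullet}$, and your appeal to ``functoriality'' to repair this reverts to the first mechanism, which loses the exact equality. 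Saying that $\theta$ ``is interpolated affinely so that it restricts correctly'' and that the identity holds ``by construction'' is exactly the step that still needs a construction. The proof the paper relies on (\cite{FN}, Theorem 1.2, and \cite{DHZ}, Prop. 6.15, combined with the correspondence between principal bundles over $\G$ and over $\G_{\bullet}$) resolves this by building the universal object out of $P_{\bullet}$ itself rather than out of $G$ alone: in bidegree $(p,q)$ one takes (roughly) the $(q+1)$-fold product of $P_p$ with its free diagonal $G$-action, $U_{p,q}\rightarrow B_{p,q}=U_{p,q}/G$, endows it with the universal connection $\theta_{U_{\bullet\bullet}}=\sum_j s_j\,\mathrm{pr}_j^{*}\theta$ (with $s_j$ the barycentric coordinates of $\Delta^q$), and takes $\Psi$ to be the diagonal; then $\Psi^{*}\theta_{U_{\bullet\bullet}}=\sum_j s_j\theta=\theta$ is a tautology, and strict compatibility in both simplicial directions is automatic because the whole construction is natural in $P_{\bullet}$ and $\theta$.

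A second, related problem is your identification of the homotopy type. For your model you argue that realizing the $\G_{\bullet}$-direction yields something fibered over $\|\G_{\bullet}\|$ with fibre $BG$; such a total space is not of the homotopy type of $BG$ in general, so this does not establish the statement even for the $B_{\bullet\bullet}$ you propose. In the construction above one realizes the $q$-direction first: for each fixed $p$ the fat realization of $q\mapsto P_p^{\,q+1}$ is an infinite join, hence weakly contractible, so $\|q\mapsto B_{p,q}\|\simeq BG$ for every $p$; all structure maps in the $p$-direction are then equivalences, and the total fat realization has the homotopy type of $BG$. In short, your overall plan (a bisimplicial universal object and affine interpolation of connections) points in the right direction, but the specific model you describe delivers neither the on-the-nose pullback identity nor the $BG$ homotopy type; the universal object must be manufactured from the given bundle with its connection, as in the references the paper's proof invokes.
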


\begin{proof} This is basically \cite{FN}, Theorem 1.2, (for principal
$GL_n(\C)$-bundles this can also be found in
\cite{DHZ}, Prop. 6.15.) using Chern-Weil theory for bisimplicial smooth
manifolds and the categorical correspondence 
between principal $G$-bundles over an \'etale groupoid $\G$ and principal
$G$-bundles over the associated simplicial smooth manifold $\G_{\bullet}$. 
\end{proof}

\section{Multiplicative Cohomology and  Differential \\ Characters for  \'etale
groupoids}\label{MH}

In this section we will construct general versions of Karoubi's multiplicative
cohomology
and Cheeger-Simons differential characters for \'etale groupoids
with respect to any given filtration of the de Rham complex. We will closely
follow the approach for simplicial smooth manifolds as described in \cite{FN}
and for the convenience of the reader will recall the necessary constructions
and concepts.
As a special case with respect to the `filtration b\^{e}te' we will
also recover the group of Cheeger-Simons differential characters for
\'etale groupoids as discussed in \cite{LU}.

For a given complex of abelian groups $C^*$ let $\sigma_{\geq p}C^*$
denote the filtration via truncation in degrees below $p$ and
similarly let $\sigma_{<p}C^*$ denote the truncation of $C^*$ in degrees
greater or equal $p$. We will first consider the special case of
Deligne's `filtration b\^ete' \cite{De} for the de Rham
complex $\Omega_{dR}^*(\G_{\bullet})$ of a smooth \'etale groupoid
$\G$. The `filtration b\^ete' $\sigma=\{\sigma_{\geq
p}\Omega^*_{dR}(\G_{\bullet})\}$ is given via truncation in degrees
below $p$
$$\sigma_{\geq p}\Omega_{dR}^j(\G_{\bullet}) =
\begin{cases} 0 & j<p, \\ \Omega^j_{dR}(\G_{\bullet}) & j\geq p \end{cases}$$

We define the group of Cheeger-Simons differential characters as follows:

\begin{definition}
Let $\G$ be a smooth \'etale groupoid and $\Lambda$ be a subgroup of
$\C$.
The {\it group of (mod $\Lambda$) differential characters of degree $k$ of}
$\G$ is
given by
$$\hat{H}^{k-1}(\G_{\bullet}; \C/\Lambda)= H^k({\mathrm cone}(\sigma_{\geq k}
\Omega^*_{dR}(\G_{\bullet})\rightarrow C^*(\G_{\bullet};
\C/\Lambda))).$$
\end{definition}

Now let ${\mathcal F}=\{F^r\Omega^*_{dR}(\G_{\bullet})\}$ be any
given filtration of the de Rham complex. We define the
multiplicative cohomology groups of $\G$ with respect to
${\mathcal F}$ as follows:

\begin{definition}
Let $\G$ be a smooth \'etale groupoid, $\Lambda$ be a
subgroup of $\C$ and ${\mathcal
F}=\{F^r\Omega^*_{dR}(\G_{\bullet})\}$ be a filtration of
$\Omega^*_{dR}(\G_{\bullet})$. The {\it groups of multiplicative cohomology 
of $\G$ associated to the filtration ${\mathcal F}$} are
given by
$$MH^{2r}_{n}(\G_{\bullet}; \Lambda; {\mathcal F})=H^{2r-n}({\mathrm cone}
(C^*(\G_{\bullet}; \Lambda)\oplus
F^r\Omega^*_{dR}(\G_{\bullet})\rightarrow C^*(\G_{\bullet}; \C))).$$
\end{definition}

If the \'etale groupoid $\G=[M\rightrightarrows M]$ is just a smooth manifold
$M$ as in Example 1.3, we recover the multiplicative cohomology groups of 
Karoubi \cite{K1}, \cite{K2} and we have $MH^{2r}_{n}(M; \Lambda; {\mathcal
F})\cong MH^{2r}_{n}(\G_{\bullet}; \Lambda; {\mathcal F})$.
If $G$ is a compact Lie group and $\G=[G\times M\rightrightarrows M]$ a
transformation groupoid as in Example 1.4, we will get equivariant versions of
multiplicative cohomology. If $\G$ is a proper \'etale groupoid representing an
orbifold as in Example 1.5, then the groups of multiplicative cohomology of the
associated inertia groupoid $MH^{2r}_{n}(\Lambda(G)_{\bullet}; \Lambda;
{\mathcal F})$ are the corresponding orbifold versions of multiplicative
cohomology.

Using similar arguments as in \cite{TXL}, 2.1 it is possible to show that the
groups of multiplicative cohomology of \'etale groupoids behave well with
respect to generalized homomorphisms and Morita equivalence and under mild
conditions therefore will give interesting invariants for differentiable stacks
and orbifolds, which we aim to study in detail in a follow-up article. A main
aspect here will be to relate them to versions of smooth Deligne cohomology for
differentiable stacks and orbifolds.

We also introduce here a more general version of differential characters for
\'etale groupoids associated to any given filtration of the de Rham complex. 
For smooth manifolds these invariants were studied systematically by the first
author in \cite{F}.

\begin{definition}
Let $\G$ be a smooth \'etale groupoid, $\Lambda$ be a
subgroup of $\C$ and ${\mathcal
F}=\{F^r\Omega^*_{dR}(\G_{\bullet})\}$ be a filtration of
$\Omega^*_{dR}(\G_{\bullet})$. The {\it groups of differential characters
(mod $\Lambda$) of degree $k$ of $\G$ associated to the
filtration ${\mathcal F}$} are given by
$$\hat{H}^{k-1}_r(\G_{\bullet}; \C/\Lambda; {\mathcal F})=
H^k({\mathrm cone}(\sigma_{\geq
k}F^r\Omega^*_{dR}(\G_{\bullet})\rightarrow C^*(\G_{\bullet};
\C/\Lambda))).$$
\end{definition}

If ${\mathcal F}$ is Deligne's `filtration b\^ete' of
$\Omega^*_{dR}(\G_{\bullet})$, we recover the ordinary groups
of differential characters of $\G$ as in Definition 2.1. And again as with the
more general groups of multiplicative cohomology, looking at the \'etale
groupoids in Examples 1.3-1.5 we will recover the classical Cheeger-Simons
differential characters \cite{CS} in the case of smooth manifolds, equivariant
versions in the case of a transformation groupoid \cite{G} and in the case of a
proper \'etale groupoid orbifold versions of differential characters associated
to the inertia groupoid \cite{LU}.

The following theorem generalizes \cite{F}, Theorem 2.3 for smooth manifolds:

\begin{theorem}
Let $\G$ be a smooth \'etale groupoid, $\Lambda$ be a
subgroup of $\C$ and ${\mathcal
F}=\{F^r\Omega^*_{dR}(\G_{\bullet})\}$ be a filtration of
$\Omega^*_{dR}(\G_{\bullet})$. There exists a surjective map
$$\Xi:\hat{H}^{2r-n-1}_r(\G_{\bullet}; \C/\Lambda; {\mathcal F})\rightarrow
MH^{2r}_n(\G_{\bullet};\Lambda; {\mathcal F})$$ whose kernel is the
group of forms in $F^r\Omega_{dR}^{2r-n-1}(\G_{\bullet})$ modulo
those forms that are closed and whose complex cohomology class is
the image of a class in $H^*(\G_{\bullet};\Lambda)$.
\end{theorem}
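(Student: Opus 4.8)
The plan is to compare the two mapping cones computing $\hat H^{2r-n-1}_r(\G_\bullet;\C/\Lambda;\mathcal F)$ and $MH^{2r}_n(\G_\bullet;\Lambda;\mathcal F)$ by producing an explicit chain map at the level of cochains and then running the long exact sequences of the cones together with the five lemma (or a direct diagram chase). First I would unwind the definitions in degree $k=2r-n$: the differential character group is $H^{2r-n}$ of $\mathrm{cone}(\sigma_{\geq 2r-n}F^r\Omega^*_{dR}(\G_\bullet)\to C^*(\G_\bullet;\C/\Lambda))$, while the multiplicative cohomology group is $H^{2r-n}$ of $\mathrm{cone}(C^*(\G_\bullet;\Lambda)\oplus F^r\Omega^*_{dR}(\G_\bullet)\to C^*(\G_\bullet;\C))$, where the map on the first summand is induced by the inclusion $\Lambda\hookrightarrow\C$ and on the second by the integration map $\mathcal I'$. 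A cocycle for the multiplicative group in degree $2r-n$ is thus a triple consisting of a $\Lambda$-cochain, a form in $F^r\Omega^{2r-n}_{dR}(\G_\bullet)$, and a $\C$-cochain of degree $2r-n-1$ realizing the coboundary; reducing the last two entries modulo $\Lambda$ and using that below degree $2r-n$ the truncated complex $\sigma_{\geq 2r-n}F^r\Omega^*_{dR}$ is zero should exhibit a natural surjection $\Xi$ onto the differential-character side, since a form of degree exactly $2r-n-1$ in $F^r$ survives the truncation only in the top relevant slot and the $\C/\Lambda$-cochain carries the secondary data.

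The key steps, in order, are: (1) write out the cone complexes explicitly with their total differentials and identify $\Xi$ on cochains by the recipe above, checking it commutes with differentials; (2) show $\Xi$ is surjective on cohomology — given a differential character, lift its $\C/\Lambda$-component to a $\C$-cochain and its cocycle condition forces the existence of a $\Lambda$-cochain making up a multiplicative cocycle, the obstruction being exactly measured by the short exact sequence $0\to\Lambda\to\C\to\C/\Lambda\to 0$; (3) compute $\ker\Xi$. For the kernel, an element mapping to zero is represented by a multiplicative cocycle whose differential-character image is a coboundary; tracing through, the ambiguity reduces to the image of $F^r\Omega^{2r-n-1}_{dR}(\G_\bullet)$ (the degree where truncation kills everything on the differential-character side but not on the multiplicative side), modulo the subgroup of forms that are closed and whose complex cohomology class, under $\mathcal I'$, lies in the image of $H^*(\G_\bullet;\Lambda)\to H^*(\G_\bullet;\C)$ — precisely because such forms become coboundaries in the multiplicative cone after adjusting by a $\Lambda$-cocycle. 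This is where the quasi-isomorphisms $\mathcal I'\colon\Omega^*_{dR}(\G_\bullet)\to C^*(\G_\bullet;\C)$ and $C^*(\G_\bullet;R)\hookrightarrow S^*(\G_\bullet;R)$ recalled in Section~\ref{sim} are used, to translate between forms and singular cochains with $\C$-coefficients.

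I expect the main obstacle to be the bookkeeping in step (3): pinning down exactly which coboundaries in the multiplicative cone hit a given form in $F^r\Omega^{2r-n-1}_{dR}(\G_\bullet)$, and verifying that the subgroup one must quotient by is neither larger nor smaller than "closed forms with $\Lambda$-rational periods." The subtlety is that a closed form $\omega\in F^r\Omega^{2r-n-1}_{dR}$ gives a trivial element of $MH^{2r}_n$ iff one can write its class as $\mathcal I'(\eta)$ for $\eta$ cohomologous to a $\Lambda$-valued cocycle, and one must be careful that the filtration condition $\omega\in F^r$ and the degree shift are compatible throughout. A clean way to organize this is to map the long exact cohomology sequence of the cone $\sigma_{\geq k}F^r\Omega^*\to C^*(\G_\bullet;\C/\Lambda)$ to that of the cone $C^*(\G_\bullet;\Lambda)\oplus F^r\Omega^*\to C^*(\G_\bullet;\C)$ and invoke the analogous manifold statement \cite{F}, Theorem~2.3, functorially — since all the complexes involved are the compatible de Rham and singular complexes of $\G_\bullet$, the manifold proof applies verbatim once the comparison diagram is set up, which is the reason the theorem is stated as a generalization of \emph{loc.\ cit.}
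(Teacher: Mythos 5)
Your outline has the map going the wrong way, and the chain-level recipe you give for it does not exist on cohomology. The theorem's $\Xi$ maps the differential character group \emph{to} multiplicative cohomology, and its kernel (forms in $F^r\Omega^{2r-n-1}_{dR}(\G_{\bullet})$ modulo closed forms whose class comes from $H^*(\G_{\bullet};\Lambda)$) is visibly a subquotient of forms sitting inside $\hat{H}^{2r-n-1}_r$, not inside $MH^{2r}_n$. You instead send a multiplicative cocycle $(c,\omega,u)$ to $(\omega,\bar u)$ by discarding $c$ and reducing $u$ mod $\Lambda$, aiming at a surjection ``onto the differential-character side''. That assignment is compatible with the differentials only in degrees $\geq 2r-n$: in degree $2r-n-1$ the form component lies in $F^r\Omega^{2r-n-1}_{dR}(\G_{\bullet})$, which the truncation kills, and the projection $F^r\Omega^*_{dR}(\G_{\bullet})\to\sigma_{\geq 2r-n}F^r\Omega^*_{dR}(\G_{\bullet})$ is not a map of complexes ($\sigma_{\geq k}$ is a subcomplex; the quotient is $\sigma_{<k}$). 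Concretely, the image of the coboundary of $(c',\omega',u')\in{\mathcal B}(F^r)^{2r-n-1}$ is, up to signs, $(\d\omega',\overline{{\mathcal I'}(\omega')}+\delta\bar u')$, which is a coboundary in ${\mathcal A}(F^r)$ only when $\omega'$ is closed with class coming from $\Lambda$; so the reduction does not descend, and the discrepancy is exactly the group of forms appearing in the statement. Since $\hat{H}^{2r-n-1}_r$ and $MH^{2r}_n$ differ by precisely that group (take e.g.\ the trivial filtration $F^r=\Omega^*_{dR}(\G_{\bullet})$ to see it is in general nonzero), the surjectivity-plus-kernel assertion is genuinely a statement about the map out of $\hat{H}^{2r-n-1}_r$ and cannot be recovered from a kernel computation for a reduction map out of $MH^{2r}_n$.

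The correct construction is in fact hidden in your step (2), read in the other direction: lifting the $\C/\Lambda$-cochain of a differential-character cocycle $(\omega,\bar u)$ to $u\in C^{2r-n-1}(\G_{\bullet};\C)$ and setting $c=\delta u-{\mathcal I'}(\omega)$ is the standard quasi-isomorphism between ${\mathcal A}(F^r)={\mathrm cone}(\sigma_{\geq k}F^r\Omega^*_{dR}(\G_{\bullet})\to C^*(\G_{\bullet};\C/\Lambda))$ and ${\mathrm cone}(C^*(\G_{\bullet};\Lambda)\oplus\sigma_{\geq k}F^r\Omega^*_{dR}(\G_{\bullet})\to C^*(\G_{\bullet};\C))$; composing with the inclusion $\sigma_{\geq k}F^r\Omega^*_{dR}(\G_{\bullet})\subset F^r\Omega^*_{dR}(\G_{\bullet})$ gives the chain map into ${\mathcal B}(F^r)$ that defines $\Xi$. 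What your plan then lacks is the one organizing step that makes your (2) and (3) automatic: after this replacement there is a short exact sequence of complexes $0\to{\mathcal A}(F^r)\to{\mathcal B}(F^r)\to\sigma_{<k}F^r\Omega^*_{dR}(\G_{\bullet})\to 0$ with $k=2r-n$. In its long exact sequence, surjectivity of $\Xi$ is immediate because $H^{2r-n}(\sigma_{<2r-n}F^r\Omega^*_{dR}(\G_{\bullet}))=0$, and $\ker\Xi$ is the image of $H^{2r-n-1}(\sigma_{<2r-n}F^r\Omega^*_{dR}(\G_{\bullet}))\cong F^r\Omega^{2r-n-1}_{dR}(\G_{\bullet})/\d F^r\Omega^{2r-n-2}_{dR}(\G_{\bullet})$ modulo the image of $H^{2r-n-1}({\mathcal B}(F^r))$, which consists exactly of the classes of closed forms whose complex cohomology class comes from $H^{2r-n-1}(\G_{\bullet};\Lambda)$ — that is where your ``$\Lambda$-rational periods'' condition enters, but on the differential-character side. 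Your closing suggestion to ``invoke \cite{F}, Theorem 2.3 functorially'' is not available as stated (there is no manifold whose complexes these are), but the proof of that theorem does transfer verbatim once the short exact sequence above is written down for the compatible complexes of $\G_{\bullet}$; no five lemma or comparison of two long exact sequences is needed. This is exactly what the paper does.
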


\noindent{\bf Proof:} We denote by ${\mathcal A}(F^r)$ and ${\mathcal B}(F^r)$ 
the cone complexes used in the definition of the groups of differential
characters and multiplicative cohomology associated to the filtration
${\mathcal F}$, i.e.
$${\mathcal A}(F^r)={\mathrm cone}(\sigma_{\geq
k}F^r\Omega^*_{dR}(\G_{\bullet})\rightarrow C^*(\G_{\bullet}; \C/\Lambda))$$
$${\mathcal B}(F^r)={\mathrm cone}
(C^*(\G_{\bullet}; \Lambda)\oplus
F^r\Omega^*_{dR}(\G_{\bullet})\rightarrow C^*(\G_{\bullet}; \C))$$
There is a quasi-isomorphism between the cone complexes
$${\mathrm cone}(\sigma_{\geq k}F^r\Omega^*_{dR}(\G_{\bullet})\rightarrow
C^*(\G_{\bullet}; \C/\Lambda))$$
$${\mathrm cone}(C^*(\G_{\bullet}; \Lambda)\oplus\sigma_{\geq
k}F^r\Omega^*_{dR}(\G_{\bullet})\rightarrow
C^*(\G_{\bullet}; \C))$$
and we get a short exact sequence of complexes
$$0\rightarrow {\mathcal A}(F^r)\rightarrow {\mathcal B}(F^r)\rightarrow
\sigma_{<k}F^r\Omega^*_{dR}(\G_{\bullet})\rightarrow 0$$ where
$\sigma_{<k}$ denotes truncation in degrees greater or equal to $k$.
Everything follows now from the long exact sequence in cohomology
associated to this short exact sequences of complexes, because for
$k=2r-n$ the cohomology group
$H^{2r-n}(\sigma_{<2r-n}F^r\Omega^*_{dR}(\G_{\bullet}))$ is trivial.
\qed

We can also identify the Cheeger-Simons differential characters
with multiplicative cohomology groups in the following way

\begin{corollary}
Let $\G$ be a smooth \'etale groupoid and $\Lambda$ be a subgroup of
$\C$. There is an isomorphism
$$\hat{H}^{r-1}(\G_{\bullet}; \C/\Lambda)\cong
MH^{2r}_r(\G_{\bullet};\Lambda; \sigma)$$
\end{corollary}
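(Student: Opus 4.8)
The plan is to derive this as a direct consequence of Theorem 2.5 together with the preceding identification of differential characters with a suitable cone complex. First I would observe that for the special filtration $\mathcal{F}=\sigma$, Deligne's `filtration b\^ete', we have $\sigma_{\geq k}F^r\Omega^*_{dR}(\G_{\bullet}) = \sigma_{\geq k}\sigma_{\geq r}\Omega^*_{dR}(\G_{\bullet}) = \sigma_{\geq \max(k,r)}\Omega^*_{dR}(\G_{\bullet})$, since iterating truncation from below simply takes the larger cutoff. Thus when $k\leq r$ the inner truncation by $\sigma_{\geq r}$ absorbs the outer one and we get $\sigma_{\geq k}\sigma_{\geq r}\Omega^*_{dR}(\G_{\bullet}) = \sigma_{\geq r}\Omega^*_{dR}(\G_{\bullet}) = F^r\Omega^*_{dR}(\G_{\bullet})$.

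Next I would specialize Theorem 2.5 to $\mathcal{F}=\sigma$ and to the value of $n$ making $2r-n=r$, i.e. $n=r$. The theorem then provides a surjective map
\[
\Xi:\hat{H}^{r-1}_r(\G_{\bullet}; \C/\Lambda; \sigma)\rightarrow MH^{2r}_r(\G_{\bullet};\Lambda; \sigma)
\]
whose kernel consists of forms in $F^r\Omega_{dR}^{r-1}(\G_{\bullet})$ modulo those which are closed with complex cohomology class pulled back from $H^*(\G_{\bullet};\Lambda)$. But $F^r\Omega_{dR}^{r-1}(\G_{\bullet}) = \sigma_{\geq r}\Omega_{dR}^{r-1}(\G_{\bullet}) = 0$ since $r-1<r$. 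Hence the kernel is trivial and $\Xi$ is an isomorphism. It remains to identify the source: by Definition 2.3 with $k=r$ and the computation above, $\hat{H}^{r-1}_r(\G_{\bullet}; \C/\Lambda; \sigma) = H^r(\mathrm{cone}(\sigma_{\geq r}F^r\Omega^*_{dR}(\G_{\bullet})\to C^*(\G_{\bullet};\C/\Lambda))) = H^r(\mathrm{cone}(\sigma_{\geq r}\Omega^*_{dR}(\G_{\bullet})\to C^*(\G_{\bullet};\C/\Lambda)))$, which is exactly $\hat{H}^{r-1}(\G_{\bullet};\C/\Lambda)$ of Definition 2.1 (taking $k=r$ there). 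Composing these identifications gives the asserted isomorphism $\hat{H}^{r-1}(\G_{\bullet}; \C/\Lambda)\cong MH^{2r}_r(\G_{\bullet};\Lambda; \sigma)$.

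The only genuine subtlety, and the step I would be most careful about, is verifying the index bookkeeping in the degree shifts of the cone complexes: one must check that the cohomological degree $k$ appearing in Definitions 2.1 and 2.3 matches the degree $2r-n$ used in the statement of Theorem 2.5 under the substitution $n=r$, and that the vanishing $F^r\Omega_{dR}^{r-1}(\G_{\bullet})=0$ is indeed the relevant degree for the kernel (rather than $F^r\Omega_{dR}^{2r-n-1}$ for some other $n$). Once the convention $k=2r-n$ from the proof of Theorem 2.5 is pinned down, everything is forced. No further geometric input about \'etale groupoids is needed; this is purely a formal consequence of the filtration identity $\sigma_{\geq k}\circ\sigma_{\geq r}=\sigma_{\geq\max(k,r)}$ and the vanishing of low-degree pieces of the b\^ete filtration.
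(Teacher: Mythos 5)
Your argument is correct and follows exactly the paper's route: the paper's proof of this corollary is precisely to specialize the surjectivity theorem to $n=r$ and ${\mathcal F}=\sigma$, with the kernel vanishing because $\sigma_{\geq r}\Omega^{r-1}_{dR}(\G_{\bullet})=0$ and the source reducing to the ordinary differential characters since the b\^ete filtration absorbs the truncation. You have merely made explicit the bookkeeping that the paper leaves implicit, so there is nothing to correct.
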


\noindent {\bf Proof.} This is a direct consequence of Theorem 2.4 for the case
$n=r$ and the filtration
${\mathcal F}$ is just Deligne's `filtration b\^ete'  \qed

Similarly as for smooth manifolds, it can be shown that the multiplicative
cohomology
groups for smooth \'etale groupoids fit into a long exact sequence (compare
\cite{K3}).

\begin{equation*}
\begin{split}
\ldots\rightarrow H^{2r-n-1}(||\G_{\bullet}||;\Lambda)\rightarrow
H^{2r-n-1}(\Omega^*_{dR}(\G_{\bullet}/F^r\Omega^*_{dR}(\G_{\bullet})))
\rightarrow & \\
\rightarrow  MH^{2r}_{n}(\G_{\bullet}; \Lambda; {\mathcal F})\rightarrow \ldots
\end{split}
\end{equation*}

Following from the above it can be shown as in \cite{CS} that the groups of
differential characters also fit into short exact sequences.

\section{Multiplicative K-theory for \'etale groupoids}\label{MK}

Let $\G=[\G_1\rightrightarrows \G_0]$ be an \'etale groupoid. 
Further let  $G$ be a Lie group with Lie algebra $\frak{g}$. Assume we are given
a principal $G$-bundle on $\G$ with connection forms 
$\theta_0,\ldots,\theta_q$ giving associated connection forms
$\theta_0,\ldots,\theta_q$ on the
principal $G$-bundle $\pi_{\bullet}: P_\bullet\rightarrow \G_\bullet$,  i.e.
$$\theta_j\in \Omega^1_{dR}(P_\bullet)\otimes{\mathfrak g}$$
such that for all $p$ and all $0\leq j\leq q$
$$\theta^{(p)}_j\in \Omega^1_{dR}(\Delta^p\times P_p)\otimes{\mathfrak g}$$
i.e. the restrictions $\theta^{(p)}_j$ are connections on the bundle
$$\Delta^p\times P_p\rightarrow \Delta^p\times \G_p.$$
Fix $q$ and let $\Delta^q$ be the standard simplex
in $\R^{q+1}$ parametrized by coordinates $(s_0,\ldots,s_q)$.

\begin{lemma}
The form $\sum_{j=0}^q\theta_js_j$
defines a connection on the pullback
bundle $\pi^*P_\bullet\rightarrow  \G_\bullet\times \Delta^q$,
where $\pi: \G_\bullet\times \Delta^q \rightarrow \G_\bullet$ is
the projection.
\end{lemma}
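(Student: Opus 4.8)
The plan is to verify the two defining properties of a connection 1-form for the pullback bundle $\pi^*P_\bullet \to \G_\bullet \times \Delta^q$ directly, working in the de Rham complex of compatible forms and, level by level, on each piece $\Delta^p \times P_p \times \Delta^q$. Write $\Theta := \sum_{j=0}^q \theta_j s_j \in \Omega^1_{dR}(\pi^*P_\bullet)\otimes \frak{g}$, noting that $s_0,\dots,s_q$ are the barycentric coordinates on $\Delta^q$ and that each $\theta_j$ is pulled back along $\pi^*P_\bullet \to P_\bullet$. First I would record that $\sum_{j=0}^q s_j = 1$ identically on $\Delta^q$; this is the single algebraic fact that makes the convex combination behave like a connection. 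Since the $\theta_j$ are pulled back from $P_\bullet$ and do not involve the $\Delta^q$-directions, and since the $G$-action on $\pi^*P_\bullet$ is the one induced on the second factor (acting trivially on $\Delta^q$), the relation $\sum_j s_j = 1$ is $G$-invariant and the verification reduces to pointwise statements at each $s \in \Delta^q$.

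The two things to check are: (i) $\Theta$ restricts to the Maurer–Cartan form on the fibers, i.e. $\iota_X \Theta = X$ for every fundamental vector field $X$ associated to $\xi \in \frak{g}$; and (ii) $\Theta$ is $\mathrm{Ad}$-equivariant, i.e. $R_g^* \Theta = \mathrm{Ad}(g^{-1})\Theta$. For (i): each $\theta_j$ satisfies $\iota_X \theta_j = \xi$ (as a connection on $P_\bullet$, hence on its pullback), so $\iota_X \Theta = \sum_j s_j \iota_X \theta_j = \bigl(\sum_j s_j\bigr)\xi = \xi$, using $\sum_j s_j = 1$. For (ii): $R_g^*$ commutes with pullback along $\pi$ and fixes the functions $s_j$, so $R_g^*\Theta = \sum_j s_j R_g^*\theta_j = \sum_j s_j \,\mathrm{Ad}(g^{-1})\theta_j = \mathrm{Ad}(g^{-1})\Theta$. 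This shows $\Theta$ is a connection 1-form on each $\Delta^p \times P_p \times \Delta^q \to \Delta^p \times \G_p \times \Delta^q$, i.e. a pseudo-connection in the sense of Section 1 on the associated simplicial smooth manifold. Finally, one must check compatibility: that the sequence $\{\Theta^{(p)}\}$ satisfies the simplicial face-map compatibility condition $(\epsi^i \times \id)^*\Theta^{(p)} = (\id \times \epsi_i)^*\Theta^{(p-1)}$ in $\Omega^1_{dR}(\Delta^{p-1}\times P_p \times \Delta^q)\otimes\frak{g}$. But this is inherited term by term: each $\theta_j$ is a compatible form on $P_\bullet$, the functions $s_j$ are constant along the simplicial direction of $\G_\bullet$, and the face maps of $\G_\bullet \times \Delta^q$ act only on the $\G_\bullet$-factor; so the compatibility for $\Theta$ follows from that for each $\theta_j$ by $\frak{g}$-linearity of the conditions.

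The main (mild) obstacle is purely bookkeeping: keeping straight the two distinct simplicial/simplex parameters — the intrinsic simplices $\Delta^p$ appearing in the fat realization of $\G_\bullet$ versus the auxiliary simplex $\Delta^q$ over which we are forming the convex combination — and being careful that the $\Delta^q$-coordinates $s_j$ play no role in the groupoid action, the simplicial structure maps of $\G_\bullet$, or the fiberwise $G$-action. Once that is organized, every step reduces to the identity $\sum_{j=0}^q s_j = 1$ together with the fact that a convex combination of connection forms (with coefficients summing to $1$) is again a connection form — the standard affine-space structure on the set of connections — applied uniformly across the simplicial degrees $p$.
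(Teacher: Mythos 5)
Your proof is correct and follows essentially the same route as the paper: check that the convex combination $\sum_j \theta_j s_j$ (coefficients summing to $1$) is a connection level by level on $\Delta^m\times P_m\times\Delta^q\rightarrow\Delta^m\times\G_m\times\Delta^q$, and then verify the simplicial compatibility conditions, which are inherited from those of the $\theta_j$ because the face maps act only on the $\G_\bullet$-factor and the $s_j$ live in $\Omega^0_{dR}(\Delta^q)$. The only difference is cosmetic: you spell out the fiberwise Maurer--Cartan and $\mathrm{Ad}$-equivariance axioms, which the paper simply quotes as the standard affine-combination fact, while its emphasis is on the compatibility bookkeeping that you also carry out.
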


\noindent {\bf Proof.} For each $m$ the sum $(\sum_{j=0}^q\theta_js_j)^{(m)}=
\sum_{j=0}^q\theta_j^{(m)}s_j$
is a connection on the bundle
$$\Delta^m\times P_m\times\Delta^q\rightarrow  \Delta^m\times
\G_m\times\Delta^q.$$
We have to verify that the compatibility conditions hold.
The strict simplicial structure on $\G_\bullet \times \Delta^q$ is given by
the maps $\epsi_i^\prime=\epsi_i\times id_{\Delta^q}$ for all $i$,
where  $\epsi_i$  is the map given by the simplicial structure on $\G_\bullet$.
We have
$$(\epsi^i\times
id_{\G_m\times\Delta^q})^*(\sum_{j=0}^q\theta^{(m)}s_j)=\sum_{j=0}
^q(\epsi^i\times id_{\G_m})^*\theta_j^{(m)}s_j$$
since the forms $\theta_j^{(m)}s_j$ are in
$$\Omega^1_{dR}(\Delta^m\times P_m; {\frak g})\otimes
\Omega_{dR}^0(\Delta^q)\subset \Omega^1_{dR}(\Delta^m\times
P_m\times\Delta^q).$$
Now, since the $\theta_j$
satisfy the compatibility conditions we have
$$\sum_{j=0}^q(\epsi^i \times
id_{\G_m})^*\theta_j^{(m)}s_j=\sum_{j=0}^q(id_{\Delta^{m-1}}\times
\epsi_i)^*\theta_j^{(m-1)}s_j.$$
As before we have
$$\sum_{j=0}^q(id_{\Delta^{m-1}}\times
\epsi_i)^*\theta_j^{(m-1)}s_j=(id_{\Delta^{m-1}}\times
\epsi_i^\prime)^*(\sum_{j=0}^q\theta^{(m-1)}s_j),$$ which proves the lemma \qed

Given  an invariant polynomial $\Phi$ of degree $k$, we denote by
$$\tilde{\Theta}_q(\Phi;\theta_0,\ldots,\theta_q)\in\Omega^{2k}_{dR}(
\G_\bullet\times\Delta^q)$$
the characteristic form on $\G_\bullet$ associated to $\Phi$ for the curvature
of the
connection $\sum_{j=0}^q\theta_js_j$.
Whenever $\Phi$ is understood, we will omit
it from the notation for the above form.
The closed form  $\tilde{\Theta}_q(\theta_0,\ldots,\theta_q)$ is
a family of compatible closed forms
$$\tilde{\Theta}_q^{(m)}(\theta_0,\ldots,\theta_q) \in
\Omega_{dR}^{2k}(\Delta^m\times \G_m\times\Delta^q).$$
We define a form
$\Theta_q(\theta_0,\ldots,\theta_q)\in\Omega_{dR}^{2k-q}(\G_\bullet)$
by integration
$$\Theta_q(\theta_0,\ldots,\theta_q)=\int_{\Delta^q}\tilde{\Theta}_q(\theta_0,
\ldots,\theta_q),$$
i.e. $\Theta_q(\theta_0,\ldots,\theta_q)$ is the family of forms
$$\Theta_q^{(m)}(\theta_0,\ldots,\theta_q)=\int_{\Delta^q}\tilde{\Theta}_q^{(m)}
(\theta_0,\ldots,\theta_q).$$
These forms satisfy the compatibility conditions since the diagram
\[\xymatrix{
\Omega_{dR}^*(\Delta^m\times
\G_m\times\Delta^q)\ar[r]^{\int_{\Delta^q}}\ar[d]_{(\epsi^i\times\mathrm{id}_{
\G_m\times
\Delta^q})^*}&
\Omega_{dR}^*(\Delta^m\times \G_m)\ar[d]^{(\epsi^i\times\mathrm{id}_{\G_m})^*}\\
\Omega_{dR}^*(\Delta^{m-1}\times
\G_m\times\Delta^q)\ar[r]^{\int_{\Delta^q}}&
\Omega_{dR}^*(\Delta^{m-1}\times \G_m)\\
\Omega_{dR}^*(\Delta^{m-1}\times
\G_{m-1}\times\Delta^q)\ar[r]^{\int_{\Delta^q}}\ar[u]^{(id_{\Delta^{m-1}}\times
\epsi_i^\prime)^*}& \Omega_{dR}^*(\Delta^{m-1}\times
\G_{m-1})\ar[u]_{(id_{\Delta^{m-1}}\times \epsi_i)^*}}\] commutes and
the forms $\tilde{\Theta}_q^{(m)}(\theta_0,\ldots,\theta_q) \in
\Omega^{2k}_{dR}(\Delta^m\times \G_m\times\Delta^q)$ are compatible.

If we denote by $t$ the variables on the simplices $\Delta^p$, by
$x$ the variables on the smooth manifolds $\G_p$ and by $s$ the variables on
the simplex $\Delta^q$, then we can write the differential on the complex
$\Omega^*_{dR}(\Delta^q\times
\G_\bullet)$ as $\d=\d_s + \d_{t,x}$, where $\d_{t,x}$ is the
differential of the complex $\Omega^*_{dR}(\G_\bullet)$. Since
$\tilde{\Theta}_q(\theta_0,\ldots,\theta_q)$ is closed, we have
\[\d_{t,x}\tilde{\Theta}_q(\theta_0,\ldots,\theta_q)=-\d_s\tilde{\Theta}
_q(\theta_0,\ldots,\theta_q).\]
Then we have
\[\d_{t,x}\Theta_q(\theta_0,\ldots,\theta_q)=\d_{t,x}\int_{\Delta^q}\tilde{
\Theta}_q(\theta_0,\ldots,\theta_q)=\]
\[=
\int_{\Delta^q}\d_{t,x}\tilde{\Theta}_q(\theta_0,\ldots,\theta_q)=-
\int_{\Delta^q}\d_s\tilde{\Theta}_q(\theta_0,\ldots,\theta_q).\]
By Stokes' theorem the last integral is equal to
$-\int_{\partial\Delta^q}\tilde{\Theta}_q(\theta_0,\ldots,\theta_q)$, so we have
proven the analogue of Theorem 3.3 in \cite{K2} for  smooth \'etale groupoids:

\begin{proposition}\label{st}Let $\G$ be a smooth \'etale groupoid. In the de
Rham complex $\Omega^*_{dR}(\G_\bullet)$ of compatible forms we have
\[\d\Theta_q(\theta_0,\ldots,\theta_q)=-\sum_{i=0}^q(-1)^i\Theta_{q-1}(\theta_0,
\ldots,\hat{\theta_i},\ldots\theta_q).\]
\end{proposition}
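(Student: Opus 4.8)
The plan is to reduce the statement to a purely combinatorial identity about integration over faces of the simplex $\Delta^q$, using the fact that $\tilde{\Theta}_q(\theta_0,\ldots,\theta_q)$ is a closed form in the total complex $\Omega^*_{dR}(\Delta^q\times\G_\bullet)$. All of the analytic content has in fact already been assembled in the paragraphs preceding the statement: splitting the differential as $\d=\d_s+\d_{t,x}$, closedness of $\tilde{\Theta}_q$ gives $\d_{t,x}\tilde{\Theta}_q=-\d_s\tilde{\Theta}_q$, and commuting $\d_{t,x}$ past the fibre integral $\int_{\Delta^q}$ (justified by the commuting diagram, i.e.\ integration over the $s$-simplex does not interact with the $(t,x)$-variables) yields
$$\d\,\Theta_q(\theta_0,\ldots,\theta_q)=-\int_{\partial\Delta^q}\tilde{\Theta}_q(\theta_0,\ldots,\theta_q)$$
by Stokes' theorem on $\Delta^q$. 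So the remaining task is to identify the right-hand integral over the boundary with the alternating sum claimed.

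The key step is therefore to compute $\int_{\partial\Delta^q}\tilde{\Theta}_q(\theta_0,\ldots,\theta_q)$ face by face. First I would recall that $\partial\Delta^q=\sum_{i=0}^q(-1)^i\partial_i\Delta^q$, where $\partial_i\Delta^q$ is the $i$-th face, namely the sub-simplex $\{s_i=0\}$, which is canonically identified with $\Delta^{q-1}$ via the coface map. The essential observation is that the restriction of the connection $\sum_{j=0}^q\theta_j s_j$ to $\G_\bullet\times\partial_i\Delta^q$ is exactly $\sum_{j\neq i}\theta_j s_j$, i.e.\ the connection built from the $q$-tuple $(\theta_0,\ldots,\hat{\theta_i},\ldots,\theta_q)$; consequently, since $\tilde{\Theta}_q$ is defined functorially as the characteristic form of this connection's curvature via the invariant polynomial $\Phi$, the pullback of $\tilde{\Theta}_q(\theta_0,\ldots,\theta_q)$ along the inclusion $\G_\bullet\times\Delta^{q-1}\hookrightarrow\G_\bullet\times\Delta^q$ of the $i$-th face equals $\tilde{\Theta}_{q-1}(\theta_0,\ldots,\hat{\theta_i},\ldots,\theta_q)$. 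Integrating this over $\Delta^{q-1}$ gives precisely $\Theta_{q-1}(\theta_0,\ldots,\hat{\theta_i},\ldots,\theta_q)$, and summing with signs $(-1)^i$ produces the asserted formula.

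To keep the argument clean I would first verify the face-restriction claim at the level of a single $\theta_j^{(m)}$ on each $\Delta^m\times P_m\times\Delta^q$ — this is immediate since on $\{s_i=0\}$ the coefficient of $\theta_i$ vanishes — and then note that forming the curvature and applying the $\Ad$-invariant polynomial $\Phi$ commutes with this restriction because pullback of forms is a ring homomorphism commuting with $\d$. The compatibility across simplicial degrees (the diagram displayed just before Proposition~\ref{st}) guarantees that all these restricted forms again satisfy the compatibility conditions, so that $\tilde{\Theta}_{q-1}(\theta_0,\ldots,\hat{\theta_i},\ldots,\theta_q)$ is a well-defined compatible form and its integral over $\Delta^{q-1}$ lives in $\Omega^{2k-(q-1)}_{dR}(\G_\bullet)$ as needed; the degree bookkeeping ($2k-q$ form on $\G_\bullet\times\Delta^q$ integrated over a $(q-1)$-face gives a $2k-q+1$ form, matching $\d\Theta_q$) is routine.

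The main obstacle is essentially bookkeeping rather than conceptual: one must be careful that the orientation and sign conventions in the Stokes formula $\int_{\Delta^q}\d_s\alpha=\int_{\partial\Delta^q}\alpha=\sum_i(-1)^i\int_{\partial_i\Delta^q}\alpha$ are precisely those that make the overall sign come out as $-\sum_{i=0}^q(-1)^i$ and not, say, $+\sum(-1)^{i+1}$; and that the sign $(-1)^i$ arising from the boundary of the simplex is the \emph{same} $(-1)^i$ that appears in the alternating sum over the omitted connections. Since the coface maps of $\Delta^q$ are indexed so that $\partial_i$ omits $s_i$, these two sources of signs match by construction, but I would state this matching explicitly rather than leave it implicit. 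With that sign reconciled, the proposition follows, and as noted in the text it is the exact analogue for smooth \'etale groupoids of Theorem~3.3 in \cite{K2}.
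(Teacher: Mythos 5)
Your proof is correct and follows essentially the same route as the paper: split $\d=\d_s+\d_{t,x}$, use closedness of $\tilde{\Theta}_q$, commute $\d_{t,x}$ with $\int_{\Delta^q}$, and apply Stokes' theorem to convert the result into an integral over $\partial\Delta^q$. The only difference is that you make explicit the face-by-face identification (restriction of $\sum_j\theta_j s_j$ to $\{s_i=0\}$ gives the connection with $\theta_i$ omitted, and pullback commutes with curvature and $\Phi$), a step the paper leaves implicit by appealing to the analogue in Karoubi's Theorem 3.3.
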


In particular, for $q=1$ we have that if given any two connections $\theta_0$
and $\theta_1$ on $P_\bullet$ and an invariant polynomial $\Phi$, we can write
canonically
\[\Phi(\theta_1)-\Phi(\theta_0)=\d\Theta_1(\Phi;\theta_0,\theta_1).\]

Write a formal series of invariant polynomials $\Phi$ as
a sum $\sum_r\Phi_r$ with $\Phi_r$ a homogeneous polynomial of
degree $r$ (see \cite{K2},\cite{K3}). 
Let ${\mathcal F}=\{F^r\Omega^*_{dR}(\G_{\bullet})\}$ be again
a filtration of the de Rham complex of $\G_\bullet$ and
$$\omega=\sum_r\omega_r,\,\,\,\eta=\sum_r\eta_r$$
be formal sums of forms in $\Omega_{dR}^*(\G_\bullet)$ (note that we
do not require that $\omega_r$ is of degree $r$, actually most of
the times this will not be the case). We will write $\omega=\eta
\mbox{ mod } {\mathcal F}$ if and only if for each $r$ we have
\begin{equation}\label{req}
\omega_r-\eta_r\in F^r\Omega_{dR}^*(\G_\bullet).
\end{equation}
We will also write $\omega=\eta \mbox{ mod } \tilde{{\mathcal F}}$
when for each $r$ the above equation is satisfied modulo exact forms.
We will not distinguish between an invariant polynomial or a formal series in
what follows,
writing just $\Phi$ and $\omega$ also for formal sums. Let us define the notion
of a multiplicative bundle:

\begin{definition}
Let $\G$ be a smooth \'etale groupoid and $\Phi$ be an invariant polynomial (or
a formal series) and
${\mathcal F}=\{F^r\Omega^*_{dR}(\G_{\bullet})\}$ be a filtration of the
de Rham complex of $\G$. An {\it $({\mathcal
F},\Phi)$-multiplicative bundle over} $\G$ is a
triple $(P_\bullet,\theta,\omega)$ where $P_\bullet$ is a principal
$G$-bundle over $\G_\bullet$, $\theta$ is a connection on $P_\bullet$
and $\omega$ is a formal series of forms in
$\Omega^*_{dR}(\G_\bullet)$ such that
\[\Phi(\theta)=\d\omega \mbox{ mod }{\mathcal F}.\]
An {\it isomorphism}
$f:(P_\bullet,\theta,\omega)\rightarrow
(P_\bullet^\prime,\theta^\prime,\omega^\prime)$
between two multiplicative bundles over $\G$ is an isomorphism $f$ of the
underlying bundles
$P_\bullet,P_\bullet^\prime$ such that
\[\omega^\prime-\omega=\Theta_1(\theta,f^*\theta^\prime) \mbox{ mod }
\tilde{{\mathcal F}}.\]
\end{definition}

It follows as in \cite{K2} that this defines an equivalence relation on
multiplicative bundles, so we can
make the following definition:

\begin{definition} Let $\G$ be a smooth \'etale groupoid. The set
$MK^\Phi(\G_\bullet;{\mathcal F})$ 
of isomorphism classes of  $({\mathcal F},\Phi)$-multiplicative bundles is
called the {\it multiplicative K-theory of $\G$
with respect to} $({\mathcal F},\Phi)$.
\end{definition}

If there is no risk of ambiguity, we will omit $\Phi$ and ${\mathcal F}$ from
the notation.

\section{Characteristic classes for secondary theories associated  to \'etale
groupoids}

Let $G$ be a Lie group with Lie algebra $\frak{g}$. Given a principal $G$-bundle
with a connection $\theta$ on the simplicial
smooth manifold $\G_{\bullet}$ associated to an \'etale groupoid $\G$ and an
invariant polynomial $\Phi$ of homogeneous degree $k$ we will associate
characteristic classes with values in multiplicative cohomology
groups and in groups of differential characters of $\G_{\bullet}$
associated to any filtration ${\mathcal F}$ of the simplicial de Rham complex
$\Omega^*_{dR}(\G_{\bullet})$. This generalizes Karoubi's secondary
characteristic classes as constructed in \cite{K2} to smooth \'etale groupoids.
An approach with a somehow similar flavour can also be found in \cite{CM2}.

Let ${\mathcal F}=\{F^r\Omega^*_{dR}(\G_{\bullet})\}$ be a filtration of the de
Rham
complex of $\G$ and $\Gamma=(P_{\bullet}, \theta, \eta)$ a
$({\mathcal F},\Phi)$-multiplicative bundle over $\G$.

The connection $\theta$ on the principal $G$-bundle $\pi_{\bullet}$
is given as a $1$-form $$\theta\in \Omega^1_{dR}(P_{\bullet})\otimes
\frak{g}.$$ The characteristic form of Theorem 1.7
$$\Phi(\theta)\in \Omega^{2k}_{dR}(\G_{\bullet})$$
can also be seen as a family of forms
$$\Phi(\theta^{(n)})\in \Omega^{2k}_{dR}(\Delta^n\times \G_n) \otimes\frak{g}$$
satisfying the compatibility conditions.

Since $\Gamma$ is a multiplicative bundle
we have
$$\Phi(\theta)=d\eta + \omega$$
where the  forms $\eta$ and $\omega$ are compatible sequences
$\eta=\{\eta^{(n)}\}$ and $\omega=\{\omega^{(n)}\}$ of differential
forms with $\omega\in F^r \Omega^{2k}_{dR}(\G_{\bullet})$ and
$\eta\in \Omega^{2k-1}_{dR}(\G_{\bullet})$.

The connection $\theta$ is the pullback of
a connection $\theta_{U_{\bullet \bullet}}$ on $U_{\bullet \bullet}$
by a map $\Psi$ as  in Theorem \ref{uni}.
Let $\Lambda$ be a subring of the complex numbers $\C$,
and assume that $\Phi$ corresponds under the Chern-Weil map to
a $\Lambda$-valued cohomology class.

For every $n$ the inclusion $\imath_n:B_{n\bullet}\rightarrow
B_{\bullet\bullet}$ induces isomorphisms in cohomology since
$||B_{n\bullet}||$ is homotopy equivalent to the classifying space $BG$
of $G$. We also have that
$\imath_n^*\theta_{U_{\bullet\bullet}}= \theta_{U_{n\bullet}}$ for every $n$.
Since the form $\Phi(\theta_{U_{n\bullet}})$ represents the class of
$\Phi$ by Theorem 1.7, and
$\imath_n^*\Phi(\theta_{U{\bullet\bullet}})=\Phi(\theta_{U_{n\bullet}})$,
we have that the form $\Phi(\theta_{U{\bullet\bullet}})\in
\Omega^{*}_{dR}(B_{\bullet\bullet})$ represents the class of $\Phi$.
Then it follows that there exist a compatible cocycle $c\in
C^{2k}(B_{\bullet\bullet};\Lambda)$ and a compatible cochain $v\in
C^{2k-1}(B_{\bullet\bullet};\C)$  such that we have
 $$ \delta v=c-\Phi(\theta_{U{\bullet\bullet}}) $$
 
Since $\Psi^*$ maps compatible cochains (in the bisimplical sense) to
compatible chains (in the simplicial sense),
the triple $\xi(\Gamma)=(\Psi^*(c), \omega, \Psi^*(v)+\eta)$ defines a cocycle
in
the cone complex
$${\mathrm cone}(C^*(\G_{\bullet}; \Lambda)\oplus
F^r\Omega^*_{dR}(\G_{\bullet})\rightarrow C^*(\G_{\bullet}; \C))$$
and since $\omega$ is a form of degree $2k$ also a cocycle in the cone complex
$${\mathrm cone}(C^*(\G_{\bullet}; \Lambda)\oplus\sigma_{\geq
2k}F^r\Omega^*_{dR}(\G_{\bullet})\rightarrow  C^*(\G_{\bullet}; \C)).$$

The triple $\xi(\Gamma)$ is a cocycle, because we have
$\delta\Psi^*c=\Psi^*\delta c=0$. Since $c$ is a cocycle, we have that
$\d\omega=\d\Phi(\theta)+ \d^2\eta=0$,
and also
\[
\delta\Psi^*v  =  \Psi^*\delta v
                              =  \Psi^*(c-\Phi(\theta_{U_{\bullet\bullet}}))
                              =  \Psi^*c-\Phi(\theta)
                              =  \Psi^*c-(\omega+ \d\eta).
\]

The class of $\xi(\Gamma)$ is independent of the choices of $c$ and $v$:
If $c^\prime$ and $v^\prime$ are other choices, we must have $c-c^\prime=\delta
u$ and $\delta u=\delta(v-v^\prime)$.
Then, since $H^{2k-1}(||B_{\bullet\bullet}||;\C)$ is trivial, there exists
a compatible cochain $w$ such that $\delta w=u+(v-v^\prime)$. If
$\xi^\prime(\Gamma)$
is the cocycle obtained from the different choice, then
$\xi(\Gamma)-\xi^\prime(\Gamma)=
(\Psi^*\delta u,0,\Psi^*(v-v^\prime))=\d(\Psi^*u,0,\Psi^*w)$.

Hence for  $2r-m=2k$ we can define the class of the multiplicative bundle
$(P_{\bullet}, \theta, \eta)$ in the
multiplicative cohomology group $MH^{2r}_m (\G_{\bullet}, \Lambda, {\mathcal
F})$
 to be  the class of $\xi(\Gamma)$.
Similarly the class of $(P_{\bullet}, \theta, \eta)$ in
$\hat{H}^{2k-1}_r (\G_{\bullet}; \C/\Lambda; {\mathcal F})$ is the class
of the triple $\xi(\Gamma)$.

\begin{proposition}
The classes constructed above are characteristic classes of elements
of $MK^\Phi(\G_\bullet;{\mathcal F})$.
\end{proposition}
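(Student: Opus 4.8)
The plan is to show that the assignment $\Gamma \mapsto \xi(\Gamma)$ descends to a well-defined map on isomorphism classes and is natural in the appropriate sense, so that the cohomology classes obtained qualify as characteristic classes of elements of $MK^\Phi(\G_\bullet; {\mathcal F})$. The preceding discussion has already established that the cohomology class of $\xi(\Gamma)$ in $MH^{2r}_m(\G_\bullet; \Lambda; {\mathcal F})$ (and correspondingly in $\hat{H}^{2k-1}_r(\G_\bullet; \C/\Lambda; {\mathcal F})$) is independent of the auxiliary choices of the cocycle $c$ and cochain $v$ on $B_{\bullet\bullet}$, and in fact (implicitly) independent of the choice of classifying map $\Psi$ in Theorem \ref{uni}, since any two such choices induce the same map on cohomology of the classifying space. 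So what remains is to prove invariance under isomorphism of multiplicative bundles.

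First I would take an isomorphism $f:(P_\bullet,\theta,\eta)\rightarrow (P'_\bullet,\theta',\eta')$ in the sense of the definition, so that $\eta'-\eta=\Theta_1(\theta,f^*\theta')$ mod $\tilde{\mathcal F}$, meaning $\eta'-\eta = \Theta_1(\theta,f^*\theta') + \rho + \d\tau$ for some $\rho \in F^r\Omega^*_{dR}(\G_\bullet)$ and some $\tau$. Using Proposition \ref{st} with $q=1$, which gives $\Phi(f^*\theta')-\Phi(\theta)=\d\Theta_1(\theta,f^*\theta')$, together with the fact that $f$ identifies the two principal bundles and hence $f^*\theta'$ is a connection on $P_\bullet$ pulling back from the same universal bundle, I would compare the two cocycles $\xi(\Gamma)$ and $\xi(\Gamma')$ directly in the cone complex. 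The key computation is that $\xi(\Gamma') - \xi(\Gamma)$ is a coboundary: the form-part differs by $\d(\text{something})$ via the $\Theta_1$-term and the discrepancy $\rho$, the $\Lambda$-cochain part is unchanged (same class $c$ can be used since the classifying map can be arranged compatibly, or the independence-of-choices argument absorbs the difference), and the $\C$-cochain part is adjusted by $\Psi^*$ of the corresponding primitive on $B_{\bullet\bullet}$. I would write out the explicit coboundary $\d(0, \text{(}F^r\text{-term)}, \Theta_1(\theta,f^*\theta') + \Psi^*(\text{correction}))$ and check that applying the cone differential reproduces $\xi(\Gamma')-\xi(\Gamma)$, using $\delta\Psi^* = \Psi^*\delta$ and the defining relations of a multiplicative bundle.

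The main obstacle I expect is bookkeeping the role of the classifying map: strictly speaking $\Gamma$ and $\Gamma'$ come with possibly different maps $\Psi, \Psi'$ to $U_{\bullet\bullet}$, and an isomorphism $f$ of the underlying bundles need not intertwine them on the nose. The clean way around this is to invoke that for any two connections on (isomorphic) principal $G$-bundles over $\G_\bullet$ one can choose classifying maps into $U_{\bullet\bullet}$ that are homotopic through bundle maps (again by Theorem \ref{uni} applied to a bundle with connection over $\G_\bullet\times\Delta^1$ interpolating $\theta$ and $f^*\theta'$), and homotopic maps induce cochain-homotopic pullbacks of $(c,v)$; this homotopy contributes exactly a term of the form $\int_{\Delta^1}$ which matches $\Theta_1$ up to the $F^r$-ambiguity. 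Once this compatibility is in place, the coboundary computation is routine. Finally I would remark that naturality under pullback along (generalized) homomorphisms of \'etale groupoids is immediate from the corresponding naturality of all the ingredients — the classifying construction of Theorem \ref{uni}, the Chern-Weil forms $\Phi(\theta)$, and the forms $\Theta_q$ — which completes the verification that these are genuine characteristic classes.
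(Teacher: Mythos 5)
Your proposal follows essentially the same route as the paper: reduce to isomorphism invariance with $P_\bullet=P'_\bullet$, use the defining relation $\eta'-\eta=\Theta_1(\theta,f^*\theta')$ mod $\tilde{\mathcal F}$ together with Proposition \ref{st} to exhibit the difference of cocycles as a coboundary absorbing the $F^r$-terms, and resolve the discrepancy between the classifying maps $\Psi,\Psi'$ by a homotopy whose chain-level contribution matches the transgression term. The only place you are sketchier than the paper is the final identification, which the paper makes precise by invoking Proposition \ref{st} with $q=2$ (the $\Theta_2$ relation) and the compatibility of transgression forms with chain homotopies from the appendix of \cite{DHZ}.
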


\noindent {\bf Proof.} The naturality follows from the construction.
We show that for two isomorphic multiplicative bundles
$\Gamma=(P_{\bullet},\theta,\eta)$  and
$\Gamma^\prime=(P_{\bullet}^\prime,\theta^\prime,\eta^\prime)$ the
cocycles $\xi(\Gamma)$ and $\xi(\Gamma^\prime)$ are  cohomologous.
We can assume $P_\bullet=P_\bullet^\prime$, and write
$\Phi(\theta)=\omega +\d\eta$ and $\Phi(\theta^\prime)=\omega^\prime
+\d\eta^\prime$ with $\omega,\omega^\prime \in F^r
\Omega^{2k}_{dR}(\G_{\bullet})$. Since the two multiplicative bundles
are isomorphic we have
\[\eta^\prime-\eta=\Theta_1(\Phi;\theta,\theta^\prime)+\sigma+\d\rho\]
with $\sigma\in F^r \Omega^{2k-1}_{dR}(\G_{\bullet})$. It follows
that
$$\omega^\prime-\omega=\d(\Theta_1(\Phi;\theta,
\theta^\prime)-(\eta^\prime-\eta))
=-\d(\sigma+\d\rho).$$ Let $\Psi^\prime$ be the map pulling back
$\Gamma^\prime$ given by Theorem 1.8 and let $c^\prime,v^\prime$
be  the cochains used in the construction for the characteristic
cycle $\xi(\Gamma^\prime)$. Then
$$\xi(\Gamma^\prime)-\xi(\Gamma)=(\Psi^{\prime *}c^\prime-\Psi^*c,
\omega^\prime-\omega,
\Psi^{\prime *}v^\prime-\Psi^*v+\Theta_1(\Phi;\theta,\theta^\prime)+\sigma
+\d\rho)$$
is cohomologous to the triple $\zeta=(\Psi^{\prime *}c^\prime-\Psi^*c,0,
\Psi^{\prime *}v^\prime-\Psi^*v+\Theta_1(\Phi;\theta,\theta^\prime))$
since the two differ only by the coboundary of $(0,-\sigma,\rho)$.
We can choose $c^\prime=c$ and
$v^\prime=v+\Theta_1(\Phi;\theta_{U{\bullet\bullet}},\theta_{U{\bullet\bullet}}
^\prime)$,
where $\theta_{U{\bullet\bullet}}^\prime$ is the connection pulling back
to $\theta^\prime$ under $\Psi^{\prime *}$ given by Theorem 1.8. Hence we have,
using the naturality of
the first transgression form,
$$\zeta=(\Psi^{\prime *}c-\Psi^*c,0,
\Psi^{\prime *}v-\Psi^*v+\Theta_1(\Phi;\Psi^{\prime
*}\theta_{U{\bullet\bullet}},\theta^\prime)
+\Theta_1(\Phi;\theta,\theta^\prime)).$$
Proposition 3.2 now implies that
\begin{equation*}
\begin{split}
\Theta_1(\Phi;\Psi^*\theta_{U{\bullet\bullet}},\Psi^{\prime
*}\theta_{U{\bullet\bullet}})
+\d\Theta_2(\Phi;\Psi^*\theta_{U{\bullet\bullet}},\Psi^{\prime
*}\theta_{U{\bullet\bullet}},\theta^\prime)=\\
\Theta_1(\Phi;\Psi^{\prime *}\theta_{U{\bullet\bullet}},\theta^\prime)
+\Theta_1(\Phi;\Psi^*\theta_{U{\bullet\bullet}},\theta^\prime).
\end{split}\end{equation*}
But $\Psi^\prime$ and $\Psi$ are homotopic, so there is a chain homotopy
$H$ between the induced cochain maps and using $H$ we can write $\zeta$ as
$$(\delta Hc,0, \delta Hv +H\delta v+
\Theta_1(\Phi;\Psi^*\theta_{U{\bullet\bullet}},\Psi^{\prime
*}\theta_{U{\bullet\bullet}})
+\d\Theta_2(\Phi;\Psi^*\theta_{U{\bullet\bullet}},\Psi^{\prime
*}\theta_{U{\bullet\bullet}},\theta^\prime)).$$
Therefore $\zeta$ is cohomologous to  $(\delta Hc,0, H\delta v+
\Theta_1(\Phi;\Psi^*\theta_{U{\bullet\bullet}},\Psi^{\prime
*}\theta_{U{\bullet\bullet}}))$, and since the transgression forms
$\Theta_1(\cdot;\cdot,\cdot)$ are compatible with chain homotopies
(see \cite{DHZ}, appendix A), the former cocycle  is cohomologous to
$$(\delta Hc,0,H\delta
v+H\Phi(\theta_{U{\bullet\bullet}}))=\d(Hc,0,0)$$ because
$Hc=H(\delta v +\Phi(\theta_{U{\bullet\bullet}}))$.\qed

\end{document}